\newtheorem*{rep@theorem}{\rep@title}
\newcommand{\newreptheorem}[2]{%
\newenvironment{rep#1}[1]{%
 \def\rep@title{#2 \ref{##1}}%
 \begin{rep@theorem}}%
 {\end{rep@theorem}}}
\newcommand{\R}{\mathbb R}
\newcommand{\ep}{\varepsilon}
\newtheorem{theorem}{Theorem}[section]
\newtheorem{lemma}[theorem]{Lemma}
\newtheorem{observation}[theorem]{Observation}
\newcommand{\expect}[1]{\mathbb{E}\!\left[#1\right]}
\newcommand{\E} {\mathbb{E}}
\title{On the spectrum of random walks on complete finite $d$-ary trees}
\author{Evita Nestoridi \thanks{ Department of Mathematics, Princeton University, USA,  emails: exn@princeton.edu, onguyen@princeton.edu. The first author was funded by EP/R022615/1.} \and Oanh Nguyen \footnotemark[1]}
\begin{document}
	\date{}
\maketitle

\begin{abstract}
In the present paper, we determine the full spectrum of the simple random walk on finite, complete $d$-ary trees. We also find an eigenbasis for the transition matrix. As an application, we apply our results to get a lower bound for the interchange process on complete, finite d-ary trees, which we conjecture to be sharp.
\end{abstract}

\section{Introduction}
Finding the spectrum of a transition matrix is a very popular subject in graph theory and Markov chain theory. There are only a few techniques known to describe the exact spectrum of a Markov chain, and they usually work under very specific conditions, such as when the Markov chain is a random walk on a finite group, generated only by a conjugacy class \cite{DiaSha}. Most well-known examples where a transition matrix has been diagonalized usually rely on combination of advanced representation theory, Fourier analysis, and combinatorial arguments \cite{DSaliola},  \cite{Hough}, \cite{Star}, \cite{hyp}, \cite{hyp2}, \cite{hyp3}, \cite{Brown}, \cite{Pike}. But even in most of these cases, there is no description of what an eigenbasis of the transition matrix would look like, which in general is needed as well in order to understand the transition matrix. 

In this work, we present the full spectrum of the simple random walk on complete, finite $d-$ary trees and a corresponding eigenbasis, and we use this information to produce a lower bound for the interchange process on the trees, which we conjecture is sharp. Consider the complete, $d-$ary tree $\mathcal T_h$ of height $h$, which has $n = 1+d+\dots+d^{h} = \frac{d^{h+1}-1}{d-1}$ vertices. We study the simple random walk on $\mathcal T_h$ ,whose transition matrix is denoted by $Q_h$, according to which when we are at the root we stay fixed with probability $1/(d+1)$, or we move to a child with probability $1/(d+1)$ each. When we are at a leaf, we stay fixed with probability $d/(d+1)$ otherwise we move to the unique parent with probability $1/(d+1)$. For any other node, we choose one of the neighbors with probability $1/(d+1)$. 

This is a well studied Markov chain. Aldous \cite{Aldous} proved that the cover time is asymptotic to $2 h^2 d^{h+1} \log h/(h-1)$. The order of the spectral gap and the mixing time of this Markov chain have been widely known for a long time. In fact, the random walk on $\mathcal T_h$ is one of the most famous examples of a random walk not exhibiting cutoff (see Example 18.6 of \cite{AMPY4}). However, finding the exact value of the spectral gap has been an open question for years, let alone finding the entire spectrum and an eigenbasis of the transition matrix $Q_h$.

We denote by $\rho$ the root of $\mathcal T_h$, by $V(\mathcal T_h)$ the vertex set of $\mathcal T_h$, and by $E(\mathcal T_h)$ the set of edges of $\mathcal T_h$. Let $\ell: V(\mathcal T_h) \rightarrow [0,\ldots, h]$ denote the distance from the root. For every node $v$, let $\mathcal T^v$ be the complete $d-$ary subtree rooted at $v$, namely consisting of $v$ and all vertices of $V( \mathcal T_h)$ that are descendants of $v$ in $\mathcal T_h$. Let $\mathcal T_i^v$ be the complete $d-$ary subtree of $\mathcal T^v$ rooted at the $i-$th child of $v$.

The next theorem includes the first result of this paper, presenting the eigenvalues and an eigenbasis of $Q_h$.
\begin{theorem}\label{thm:spectrum} 
	\begin{enumerate}  [label = (\alph*)]
		\item $Q_h$ is diagonalizable with $1$ being an eigenvalue with multiplicity 1. Every other eigenvalue $\lambda\neq 1$ of $Q_h$ is of the form
		\begin{equation}\label{eq:lambda:x:thm}
		\lambda = \frac{d}{d+1}\left (x+ \frac{1}{xd}\right ),
		\end{equation}
		where $x\neq \pm \frac{1}{\sqrt d}$ is a solution of one of the following $h+1$ equations:
		\begin{equation} \label{eq:x:sym:thm}
		d^{h+1}x^{2h+2} = 1
		\end{equation}
		and 
		\begin{equation}  \label{eq:x:antisym}
		d^{k+2} x^{2k+4}- d^{k+2} x^{2k+3}+ dx -1 = 0,\quad \text{for some } 0 \leq k \leq h-1.
		\end{equation}
		
		Reversely, each solution $x\neq \pm \frac{1}{\sqrt d}$ of these equations corresponds to an eigenvalue $\lambda$ according to \eqref{eq:lambda:x:thm}. 
		For each of these equations, if $x$ is a solution then so is $\frac{1}{xd}$. Both $x$ and $\frac{1}{xd}$ correspond to the same $\lambda$. The correspondence between $x$ and $\lambda$ is $2$-to-$1$.
		
		\item \label{thm:spectrum:eigenvector} For each solution $x\neq \pm \frac{1}{\sqrt d}$ of \eqref{eq:x:sym:thm}, an eigenvector $f_{\lambda}$ with respect to $\lambda$ is given by the formula 
		\begin{equation}\label{eq:thm:spectrum:sym}
		f_{\lambda}(v) = \frac{dx^{2}-x}{dx^{2}-1}  x^i +  \frac{x-1}{dx^{2}-1}\frac{1}{d^{i} x^{i}} \quad\text{for every $v$ with $\ell(v)=i$, $0\le i\le h$}.
		\end{equation}
		
		For each $0 \leq k \leq h-1$, each solution $x\neq \pm \frac{1}{\sqrt d}$ of \eqref{eq:x:antisym}, each $v \in V(\mathcal T_h)$ such that $\ell(v)= h-1-k$, and each $j\in [1, \dots, d-1]$, an eigenvector $f_{v, j, j+1}$ with respect to $\lambda$ is given by the formula 
		\begin{equation}\label{eq:thm:spectrum:antisym}
		f_{ v, j, j+1}(w) = 
		\begin{cases}
		& \frac{dx^{i+2}}{dx^{2}-1} - \frac{1}{(dx^{2}-1)d^{i} x^{i}} \quad \mbox{ for } w\in \mathcal T^v_j,\mbox{ where } i= \ell(w) -h+k,\\
		& -\frac{dx^{i+2}}{dx^{2}-1} + \frac{1}{(dx^{2}-1)d^{i }x^{i}} \quad \mbox{ for } w\in \mathcal T^v_{j+1}, \mbox{ where } i= \ell(w) -h+k,\\
		& 0, \mbox{ otherwise.} 
		\end{cases}
		\end{equation}
		\item The collection of these eigenvectors together with the all-1 vector form an eigenbasis of $Q_h$.
	\end{enumerate}

\end{theorem}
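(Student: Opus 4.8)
The plan is to build an explicit $Q_h$-invariant direct-sum decomposition of $\mathbb{C}^{V(\mathcal{T}_h)}$ that reflects the self-similarity of the tree, and then to sort the vectors of parts (a)--(b) into its summands. Write $\lambda(x):=\tfrac{d}{d+1}\bigl(x+\tfrac{1}{xd}\bigr)$ as in \eqref{eq:lambda:x:thm}. Let $V_{\mathrm{sym}}$ be the $(h+1)$-dimensional space of functions that are constant on each level $\{v:\ell(v)=i\}$; it is $Q_h$-invariant, and $Q_h$ restricted to it acts as the birth--death chain $\bar Q$ on $\{0,1,\dots,h\}$ obtained by lumping levels. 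For each internal vertex $v$, say with $\ell(v)=h-1-k$, let $U_v$ be the space of functions supported on $\mathcal{T}^v$ that vanish at $v$, are constant on each level of each child subtree $\mathcal{T}^v_1,\dots,\mathcal{T}^v_d$, and whose values over the $d$ children at every fixed depth sum to zero. A direct look at the transition rule shows that $U_v$ is $Q_h$-invariant, that $\dim U_v=(d-1)(k+1)$, and that $Q_h|_{U_v}$ is conjugate to $d-1$ copies of a tridiagonal operator $M_k$ on $\{0,1,\dots,k\}$ carrying a Dirichlet-type condition at depth $0$ (from the vanishing at $v$) and the holding condition at the leaves.

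First I would prove the decomposition $\mathbb{C}^{V(\mathcal{T}_h)}=V_{\mathrm{sym}}\oplus\bigoplus_{v\ \text{internal}}U_v$. The dimensions add up, since $(h+1)+\sum_{k=0}^{h-1}d^{\,h-1-k}(d-1)(k+1)=\tfrac{d^{h+1}-1}{d-1}=n$ by a short geometric-series computation; so it suffices to show these subspaces together span, i.e.\ that their common annihilator is $\{0\}$. Concretely, if a functional $\mu$ has total mass zero on every level (orthogonality to $V_{\mathrm{sym}}$) and, for every internal $v$ and every depth, equal sub-masses on the $d$ child subtrees of $v$ (orthogonality to every $U_v$), then a top-down induction on the level forces $\mu$ to be constant on each level, hence $\mu\equiv 0$.

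Next, granting (a) and (b), I would place the listed eigenvectors. The all-$1$ vector and every vector in \eqref{eq:thm:spectrum:sym} lies in $V_{\mathrm{sym}}$; the tridiagonal matrix $\bar Q$ has nonzero off-diagonal entries, hence $h+1$ distinct real eigenvalues, one of which is $1$, and by (a)--(b) the other $h$ are exactly the $\lambda(x)$ for the admissible solutions $x\neq\pm1/\sqrt d$ of \eqref{eq:x:sym:thm}. Thus the all-$1$ vector and the $h$ vectors $f_\lambda$ have pairwise distinct eigenvalues and form a basis of $V_{\mathrm{sym}}$. Likewise every vector in \eqref{eq:thm:spectrum:antisym} attached to $v$ (with $\ell(v)=h-1-k$) lies in $U_v$; the tridiagonal matrix $M_k$ likewise has $k+1$ distinct real eigenvalues, which by (a)--(b) are precisely the $\lambda(x)$ for the admissible $x\neq\pm1/\sqrt d$ of \eqref{eq:x:antisym}, and for a fixed such $\lambda$ the $d-1$ vectors $f_{v,1,2},\dots,f_{v,d-1,d}$ are independent because the sibling patterns $e_1-e_2,\dots,e_{d-1}-e_d$ span the mean-zero hyperplane of $\mathbb{C}^d$. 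Hence the $(d-1)(k+1)$ vectors $f_{v,j,j+1}$ form a basis of $U_v$. Taking the union over all summands yields a basis of $\mathbb{C}^{V(\mathcal{T}_h)}$ made of eigenvectors of $Q_h$ --- which is part (c), and re-proves the diagonalizability asserted in (a).

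I expect the difficulty here to be organizational rather than conceptual: the one genuinely substantive step is the decomposition in the second paragraph (getting the invariance of $U_v$ and $V_{\mathrm{sym}}$, their dimensions, and the annihilator/spanning argument right), while everything else reduces to the simple-spectrum property of irreducible tridiagonal matrices together with parts (a)--(b). One small point to keep in mind is the degenerate situation in which $\lambda(\pm1/\sqrt d)$ is itself an eigenvalue of $\bar Q$ or of some $M_k$, so that the corresponding eigenvector is recovered only as a limit of the formulas \eqref{eq:thm:spectrum:sym} or \eqref{eq:thm:spectrum:antisym}; this is side-stepped by the hypothesis $x\neq\pm1/\sqrt d$ in the statement and does not disturb the counting.
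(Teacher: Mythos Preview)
Your plan is correct and lands on the same two tridiagonal chains as the paper --- your $\bar Q$ is the paper's $R_h$ and your $M_k$ is the paper's $S_k$ --- but the architecture is genuinely different. The paper never writes down the invariant decomposition $V_{\mathrm{sym}}\oplus\bigoplus_v U_v$; instead it (i) shows (Lemma~\ref{lm:sym:antisym}) that every eigenspace already contains a completely symmetric or a ``pseudo anti-symmetric'' vector, (ii) computes those vectors by solving the recurrence $dx^2-(d+1)\lambda x+1=0$ subject to the two boundary conditions (Lemmas~\ref{lm:sym:detail} and~\ref{lm:anti:detail}), and (iii) proves linear independence of the resulting list by an ordered induction over the internal vertices (Lemma~\ref{lm:spanning}), whose key step --- Observation~\ref{obs}, that a level-constant function with zero mass on every level must vanish --- is exactly your annihilator argument specialised to $v=\rho$. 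Your decomposition-first route is cleaner and renders the existence Lemma~\ref{lm:sym:antisym} unnecessary; the paper's route, on the other hand, actually carries out the recurrence analysis that yields parts (a) and (b), which you only grant. To complete the proof you still owe that computation (it is routine: write the general solution $y_i=\alpha_1 x^i+\alpha_2 (dx)^{-i}$, let the boundary condition at depth $0$ fix $\alpha_1,\alpha_2$, and let the leaf condition produce \eqref{eq:x:sym:thm} or \eqref{eq:x:antisym}); the counting in that analysis also takes care of your worry about $x=\pm 1/\sqrt d$, since the remaining $2h$ (respectively $2k+2$) roots already account for all $h$ nontrivial eigenvalues of $\bar Q$ (respectively all $k+1$ eigenvalues of $M_k$), so $\lambda(\pm 1/\sqrt d)$ never appears.
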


In Lemma \ref{lm:sym:antisym} and Figure \ref{fig:sym:antisym}, we describe and illustrate the eigenvectors in more detail.

The idea behind the proof is to consider appropriate projections of the random walk. For example, let $X_t$ be the state of the random walk at time $t$ and let $Y_t$ be the distance of $X_t$ from the root. Then $Y_t$ is a Markov chain on $[0,h],$ whose eigenvalues are also eigenvalues of $Q_h$. Also, the eigenvectors of $Y_t$ lift to give the eigenvectors presented in \eqref{eq:thm:spectrum:sym}. This computation is not going to give us the full spectrum, however. 

For example, in the case of the binary tree, another type of projection to consider is as follows. We consider the process $W_t,$ which is equal to $-Y_t$ if $X_t \in \mathcal T^{\rho}_1$ and equal to $Y_t$ otherwise. The second largest eigenvalue can be derived by this new process, while the eigenvectors are of the form presented in \eqref{eq:thm:spectrum:antisym}. The reason why this is the right process to study is hidden in the mixing time of the random walk on $\mathcal T_h$. A coupling argument roughly says that we have to wait until $X_t$ reaches the root $\rho$. The first time that $X_t$ hits $\rho$ is captured by $W_t$, since $W_t$ is a Markov chain on $[-h,h]$, where the bias is towards the ends and away from zero. The projections that we consider form birth and death processes, whose mixing properties have been thoroughly studied by Ding, Lubetzky, and Peres \cite{DLP}. To capture the entire spectrum, our method is to find in each eigenspace a well-structured eigenvector, which occurs by considering an appropriate projection.

Our analysis has immediate applications to card shuffling, namely the interchange process on $\mathcal T_h$, and to the exclusion process. We enumerate the nodes in $V (\mathcal T _h)$ and we assign cards to the nodes. At time zero, card $i$ is assigned to node $i$. The interchange process on $\mathcal T_h$ chooses an edge uniformly at random and then flips a fair coin. If heads, interchange the cards on the ends of $e$; if tails, stay fixed. A configuration of the deck corresponds to an element of the symmetric group.

Let $g \in S_n$. Let $P$ be the transition matrix of the interchange process on the complete, finite $d-$ary tree $\mathcal T_h$ and let $P^t_{id}(g)$ be the probability that we are at $g$ after $t$ steps, given that we start at the identity. We define the total variation distance between $P^t_{id}$ and the uniform measure $U$ to be 
\begin{equation}
	d(t)= \frac{1}{2} \sum_{x \in S_n} \left \vert P^t_{id}(x) -\frac{1}{n!}\right \vert. \nonumber
\end{equation}

A celebrated result concerning the interchange process was the proof of Aldous conjecture \cite[Theorem 1.1]{CLR}, which states that the spectral gap of P is the same as the spectral gap of the Markov chain that the ace of spades performs. Adjusting our computations, we now get the following result.
\begin{theorem}\label{thm:lowerbound}
	For the interchange process on the complete $d$-ary tree of depth $h$, we have that 
	\begin{itemize}
		\item[(a)]  The spectral gap of the transition matrix is $\frac{(d-1)^{2}}{2(n-1) d^{h+1}} + O \left (\frac{\log_{d} n}{n^{3}}\right )$, 
		\item[(b)] And if $t=\frac{1}{d-1}n^{2}\log n- \frac{1}{d-1}n^2 \log \left( \frac{1}{\ep} \right) + O\left (  n^{2}\right ) $, then 
		$$d(t) \geq1-  \ep,$$
		where $\ep$ is any positive constant. 
	\end{itemize}
\end{theorem}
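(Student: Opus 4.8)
The plan is to leverage Theorem \ref{thm:spectrum} together with the Aldous conjecture (\cite[Theorem 1.1]{CLR}) for part (a), and then run a standard eigenfunction/distinguishing-statistic lower bound argument (Wilson's method) for part (b). For part (a), the Aldous conjecture tells us that the spectral gap of the interchange matrix $P$ equals the spectral gap of the single-card walk, i.e. of $Q_h$, so $1-\lambda_\star(P) = 1-\lambda_\star(Q_h)$ where $\lambda_\star(Q_h)$ is the second-largest eigenvalue. By Theorem \ref{thm:spectrum}(a), $\lambda_\star$ is the largest $\lambda \ne 1$ of the form $\frac{d}{d+1}(x + \frac{1}{xd})$ coming from the equations \eqref{eq:x:sym:thm} and \eqref{eq:x:antisym}. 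The largest such $\lambda$ will come from the $k=0$ case of \eqref{eq:x:antisym} (the antisymmetric eigenvector supported near the deepest levels, matching the intuition in the introduction that the bottleneck is "waiting to hit the root"); I would extract the relevant root $x$ of $dx^4 - dx^3 + dx - 1 = 0$ that is closest to $1/\sqrt{d}$, Taylor-expand in $x = 1/\sqrt d + \delta$ (or directly in the small parameter $d^{-(h+1)}$, since $\lambda_\star \to 1$ as $h \to \infty$), and convert $1-\lambda$ in terms of $n = \frac{d^{h+1}-1}{d-1}$. The claimed answer $\frac{(d-1)^2}{2(n-1)d^{h+1}} + O(\log_d n / n^3)$ should drop out; note $d^{h+1} = (d-1)n + 1$, so this is really a clean rational function of $d$ and $n$ up to lower-order terms, and the error term tracks the discrepancy between the actual root and its leading approximation.

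For part (b), the strategy is Wilson's method: use an eigenfunction $\Phi$ of the interchange process with eigenvalue $\lambda$ close to $1$ as a distinguishing statistic. Lifting the single-particle eigenvector $f$ (from Theorem \ref{thm:spectrum}(b)) to the interchange process gives an eigenfunction $\Phi(g) = \sum_v f(v)\,\tilde f(g^{-1}(v))$ or, more simply, $\Phi(g) = \sum_i f(i)\, h(g(i))$ built from a pair of single-particle eigenvectors with matching eigenvalue; the key point is that $\E[\Phi(X_t)] = \lambda^t \Phi(X_0)$ and one needs a bound on the one-step variance increment $\E[(\Phi(X_{t+1}) - \lambda \Phi(X_t))^2 \mid X_t] \le R$. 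Wilson's lemma then yields, for the statistic centered at its stationary mean (zero, by symmetry of $f$),
\begin{equation*}
d(t) \ge 1 - \frac{R}{(1-\lambda^2)\,\lambda^{2t}\,\Phi(X_0)^2}.
\end{equation*}
Choosing $t$ so that $\lambda^{2t}\Phi(X_0)^2 \approx R/(1-\lambda^2)\cdot \ep^{-1}$ and plugging in $1-\lambda \approx \frac{(d-1)^2}{2(n-1)d^{h+1}} \asymp \frac{(d-1)}{2 n^2}$ (using $d^{h+1} \asymp (d-1)n$) gives the threshold $t \approx \frac{1}{2(1-\lambda)}\log(\text{stuff}) \approx \frac{1}{d-1} n^2 \log n$, with the $-\frac{1}{d-1} n^2 \log(1/\ep)$ correction coming from the $\ep$-dependence and the $O(n^2)$ absorbing $\Phi(X_0)^2$, $R$, and constants. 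I would need to (i) pick $f$ concretely — the natural choice is the top antisymmetric eigenvector, or possibly the symmetric one \eqref{eq:thm:spectrum:sym}, whichever makes $\Phi(X_0)$ largest and $R$ easiest to bound; (ii) compute $\Phi(X_0) = \sum_v f(v)^2$-type quantity and show it is polynomially large in $n$; and (iii) bound $R$, which only involves the single edge flipped at each step and so is controlled by $\max_{(u,v)\in E}(f(u)-f(v))^2$ times a constant.

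The main obstacle I anticipate is the variance bound $R$ in part (b) combined with getting the $\log n$ coefficient exactly $\frac{1}{d-1}$ rather than just up to constants. Wilson's method is only sharp when the eigenfunction is "spread out" enough that $R$ is not too large relative to $\Phi(X_0)^2(1-\lambda)$; here the antisymmetric eigenvector is concentrated near the bottom of the tree (exponentially small near the root), so I must check that $\Phi(X_0)^2/R$ is large enough — polynomially large in $n$ suffices for the leading-order threshold, but pinning the constant $\frac{1}{d-1}$ requires that the $\log$ of this ratio be $o(n^2 \log n)$-relative, i.e. at most $n^2 \cdot o(\log n)$, which should hold since both $\Phi(X_0)^2$ and $R$ are polynomial in $n$ while $1/(1-\lambda) \asymp n^2$. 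A secondary technical point is that the eigenfunctions in Theorem \ref{thm:spectrum} are real only for real roots $x$; I should confirm the top eigenvalue's root $x$ is real and positive (it is, since $\lambda_\star$ is real and the corresponding $x$ near $1/\sqrt d$ is a real root of \eqref{eq:x:antisym}), so the lifted eigenfunction $\Phi$ is genuinely real-valued.
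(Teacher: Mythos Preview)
Your overall architecture (Aldous' conjecture for (a), Wilson's method for (b)) matches the paper, but part (a) contains a genuine error that would derail the computation.

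You claim the second-largest eigenvalue comes from the $k=0$ case of \eqref{eq:x:antisym}, with the relevant root $x$ near $1/\sqrt d$. Both of these are wrong. First, the $k=0$ equation $d^{2}x^{4}-d^{2}x^{3}+dx-1=0$ does not involve $h$ at all, so its roots (and the corresponding $\lambda$'s) are fixed numbers bounded away from $1$; they cannot produce a gap of order $n^{-2}$. Second, the map $x\mapsto \frac{d}{d+1}(x+\tfrac{1}{dx})$ attains its minimum $\tfrac{2\sqrt d}{d+1}$ at $x=1/\sqrt d$ and equals $1$ only at $x=1$ and $x=1/d$; an $x$ near $1/\sqrt d$ gives an eigenvalue near $\tfrac{2\sqrt d}{d+1}$, not near $1$. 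You have also mismatched the indexing: the pseudo anti-symmetric eigenvector attached to the root $v=\rho$ (the one coming from the process $W_t$ in the introduction) has $\ell(v)=0$, hence $k=h-1$ in \eqref{eq:x:antisym}, not $k=0$. The paper shows that the second-largest eigenvalue comes from this $k=h-1$ equation, and the relevant real root sits at $x=1-\frac{d-1}{d^{h+1}}+O(hd^{-2h})$, i.e.\ near $1$; Taylor-expanding $\lambda(x)$ about $x=1$ then gives $1-\lambda_2=\frac{(d-1)^2}{(d+1)d^{h+1}}+O(\log_d n/n^2)$.

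Two further gaps in (a): the single-card chain in the interchange process is not $Q_h$ but the lazier $Q_h'=\frac{2n-d-3}{2(n-1)}I+\frac{d+1}{2(n-1)}Q_h$, so an extra factor $\frac{d+1}{2(n-1)}$ must be inserted to reach the stated answer; and you never rule out that the spectral gap is governed by the bottom of the spectrum --- the paper checks separately that no eigenvalue lies below $-\tfrac{2\sqrt d}{d+1}$.

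For (b), your plan is essentially the paper's: take $F(\sigma)=\sum_v f(v)f(\sigma(v))$ with $f$ the $k=h-1$ pseudo anti-symmetric eigenvector, verify $\E[F(\sigma_{t+1})\mid\sigma_t]=\lambda' F(\sigma_t)$ with $1-\lambda'=\frac{d+1}{2(n-1)}(1-\lambda_2)$, bound the one-step increment via $\max_{e=(u,v)}|f(u)-f(v)|$, and apply Wilson's lemma. Once (a) is fixed so that the correct eigenvector (with $x$ near $1$, supported on $\mathcal T_1^\rho\cup\mathcal T_2^\rho$) is used, the variance bound and the constant $\tfrac{1}{d-1}$ go through as you describe.
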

This is already much faster than the interchange process on the path, another card shuffle that uses $n-1$ transpositions, which Lacoin   \cite{Lacoin} recently proved exhibits cutoff at $\frac{1}{2\pi^2} n^3 \log n$. We conjecture that the lower bound in part $(b)$ of Theorem \ref{thm:lowerbound} is sharp and that the interchange process on $\mathcal T_h$ exhibits cutoff at $\frac{1}{d-1} n^{2}\log n$.

We can get lower bounds for the mixing time of another well studied process, the exclusion process on the complete $d$-ary tree. This is a famous interacting particle system process, according to which at time zero, $k \leq n/2$ nodes of the tree are occupied by indistinguishable particles. At time $t$, we pick an edge uniformly at random and we flip the two ends. Similar computations to the ones of the proof of Theorem \ref{thm:lowerbound} give that if $t=\frac{1}{d-1}n^{2}\log k- \frac{1}{d-1}n^2 \log \left( \frac{1}{\ep} \right) + o\left (  n^{2} \log k\right ) $, then 
$$d(t) \geq1-  \ep,$$
where $\ep>0$ is a constant. Combining Oliveira's result \cite{OL} with Theorem \ref{thm:lowerbound} $(b)$,  we get that the order of the mixing time of the exclusion process on the complete $d-$ary tree is $n^2 \log k$. 

As potential open questions, we suggest trying to find the spectrum or just the exact value of the spectral gap for the simple random on finite Galton-Watson trees or for the frog model as presented in \cite{Jon}.

\section{The spectrum of $Q_h$}
This section is devoted to the proof of Theorem \ref{thm:spectrum}.

Let $\lambda$ be an eigenvalue of $Q_h$ and let $E(\lambda)$ be the corresponding eigenvalue. We first show that there exists an eigenvector in $E(\lambda)$ that has the form described in Theorem \ref{thm:spectrum} \ref{thm:spectrum:eigenvector}.
\begin{lemma}\label{lm:sym:antisym}
The eigenspace $E(\lambda)$ contains an eigenvector $f$ that has one of the following forms:
\begin{enumerate}
\item[(a)] [Completely symmetric] $f(v)=f(w)$ for every $v,w \in V(\mathcal T_h) $ such that $\ell(v)=\ell(w)$. In this case we will call $f$ completely symmetric for $\mathcal T_h$;
\item[(b)] [Pseudo anti-symmetric] There is a node $v$ and $i,j \in \{ 1,\ldots, d\}$ such that  $f(w)=0$ for every $w \notin V(\mathcal T_i^v\cup \mathcal T_j^v)$, $f \vert_{\mathcal T_i^v}$ and $f \vert_{\mathcal T_j^v}$ are completely symmetric, and $f \vert_{\mathcal T_i^v}=-f \vert_{\mathcal T_j^v}$. We call such $f$ pseudo anti-symmetric.
\end{enumerate}
\end{lemma}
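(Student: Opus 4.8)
The plan is to exploit the symmetry group of $\mathcal{T}_h$ acting on the eigenspace $E(\lambda)$, and to produce the required eigenvector by an averaging-and-induction argument on the height. Let $\mathrm{Aut}(\mathcal{T}_h)$ be the automorphism group of the rooted tree; it acts on functions on $V(\mathcal{T}_h)$ by $(\sigma \cdot f)(v) = f(\sigma^{-1} v)$, and since every $\sigma$ commutes with $Q_h$ this action preserves each eigenspace $E(\lambda)$. I will argue that $E(\lambda)$ must contain a nonzero vector fixed, up to sign, by a large subgroup of $\mathrm{Aut}(\mathcal{T}_h)$, and that such a vector is forced to have one of the two stated shapes.

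First I would set up the following dichotomy at the root. Consider the subgroup $G_\rho \le \mathrm{Aut}(\mathcal{T}_h)$ that permutes the $d$ principal subtrees $\mathcal{T}^\rho_1, \dots, \mathcal{T}^\rho_d$ while fixing the root; $G_\rho \cong S_d$ and also contains, inside each $\mathcal{T}^\rho_i$, the automorphisms of that subtree. Take any $g \in E(\lambda)$, $g \neq 0$. Averaging $g$ over all of $\mathrm{Aut}(\mathcal{T}_h)$ produces a completely symmetric element of $E(\lambda)$ (its value at $v$ depends only on $\ell(v)$, since $\mathrm{Aut}(\mathcal{T}_h)$ acts transitively on each level). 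If this average is nonzero, we are in case (a) and done. Otherwise, the average is zero, which forces cancellation among the $S_d$-orbit; in particular there is a level $i$ and two vertices $v, w$ with $\ell(v) = \ell(w) = i$ sitting in different principal subtrees with $g$ not symmetric across the $S_d$-action. I then decompose the restriction of $g$ to the $d$-tuple of principal subtrees into $S_d$-isotypic components: the trivial component is the symmetric part (which averages to zero by assumption, so it is itself zero), and the only other constituent that can appear in a way compatible with the recursive structure is the standard representation, whose highest-weight–type vectors are exactly differences supported on two subtrees $\mathcal{T}^\rho_i, \mathcal{T}^\rho_j$ with opposite values. Projecting $g$ onto such a component yields a nonzero (for a suitable pair $i,j$) eigenvector $f$ with $f$ supported on $\mathcal{T}^\rho_i \cup \mathcal{T}^\rho_j$ and $f|_{\mathcal{T}^\rho_i} = -f|_{\mathcal{T}^\rho_j}$. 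Finally, averaging this $f$ over $\mathrm{Aut}(\mathcal{T}^\rho_i) \times \mathrm{Aut}(\mathcal{T}^\rho_j)$ (acting diagonally so as to preserve the antisymmetry) makes $f|_{\mathcal{T}^\rho_i}$ completely symmetric as a function on $\mathcal{T}^\rho_i$, provided this average is nonzero; if it vanishes, we recurse into the pair of subtrees, applying the same dichotomy one level down, and after finitely many steps either the symmetric average survives at some node $v$ — giving case (b) with that $v$ — or we reach the leaves, where the analysis is trivial.

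The main obstacle — and the point that needs real care rather than soft symmetry talk — is showing that the recursion actually terminates with a \emph{nonzero} eigenvector, i.e. that not every averaging projection in sight is zero. This is where the eigenvalue equation must be used quantitatively: one has to check that a completely symmetric function on a $d$-ary tree that is an eigenfunction of $Q_h$ with eigenvalue $\lambda \neq 1$ and vanishes identically cannot arise as the only surviving component, by relating the "symmetric average over a subtree rooted at $v$" of a genuine eigenvector to a one-dimensional (birth-and-death) recursion in $\ell$, and arguing that if all such averages vanished for all $v$ then $g$ itself would have to be $0$. Concretely, I would track, for the chosen eigenvector, the innermost node $v$ at which some subtree-average is nonzero; below $v$ everything is symmetric per subtree, at $v$ the principal-subtree components must sum to zero (else averaging at $v$ gives case (a)), so only the standard-representation part is left, delivering exactly the pseudo-antisymmetric shape of case (b). The bookkeeping that "innermost such node exists and the leftover component is nonzero and lies in $E(\lambda)$" is the crux; everything else is the routine representation theory of $S_d$ plus the fact that $\mathrm{Aut}(\mathcal{T}_h)$-averaging commutes with $Q_h$.
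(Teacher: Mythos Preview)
Your high-level idea (exploit $\mathrm{Aut}(\mathcal T_h)$ to locate a structured eigenvector, and produce the antisymmetric shape by subtracting a transposition image) is correct in spirit and is indeed the engine of the paper's proof. But as written the argument has a genuine gap, and the paper's route is both simpler and avoids it.

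The gap is in your termination/``innermost node'' step. You write that ``at $v$ the principal-subtree components must sum to zero (else averaging at $v$ gives case (a)).'' This is not true: averaging $f$ over the stabiliser of $\mathcal T^v$ inside $\mathrm{Aut}(\mathcal T_h)$ symmetrises $f$ \emph{inside} $\mathcal T^v$ but leaves it untouched outside, so the result is not completely symmetric on $\mathcal T_h$ and does not yield case~(a). Consequently your dichotomy at an interior $v$ does not close, and the top-down recursion (``if the diagonal average vanishes, recurse into the pair of subtrees'') is left without a well-defined next step that still produces a nonzero element of $E(\lambda)$. You also never actually invoke the eigenvector equation to force symmetry \emph{below} the chosen node; you assert ``below $v$ everything is symmetric per subtree'' but this requires proof, and it is exactly where $Q_h f=\lambda f$ must be used.

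The paper bypasses all of this by going bottom-up rather than top-down. It assumes no completely symmetric eigenvector exists, takes any $f\in E(\lambda)$, and picks a vertex $v$ of \emph{maximal} depth at which two children, say the $i$-th and $j$-th, receive different $f$-values. Maximality plus the relation $\lambda f(u)=\tfrac{1}{d+1}f(\text{parent})+\tfrac{d}{d+1}f(\text{child})$ then propagates equality downward: once $f$ agrees on all children of a node it must agree on all grandchildren, hence $f\vert_{\mathcal T^v_k}$ is completely symmetric for every $k$. Now a single transposition of $\mathcal T^v_i$ and $\mathcal T^v_j$ gives another eigenvector $g$, and $f-g\in E(\lambda)$ is nonzero, supported on $\mathcal T^v_i\cup\mathcal T^v_j$, completely symmetric on each piece, and antisymmetric between them. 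No averaging, no isotypic decomposition, no recursion; the ``find the deepest asymmetry'' choice does all the work your recursion was trying to do.
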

The following illustrations explain what the described eigenvectors look like for binary trees.

\tikzset{every tree node/.style={minimum width=2em,draw,circle},
         blank/.style={draw=none},
         edge from parent/.style=
         {draw,edge from parent path={(\tikzparentnode) -- (\tikzchildnode)}},
         level distance=1.5cm}

\begin{figure}[H]
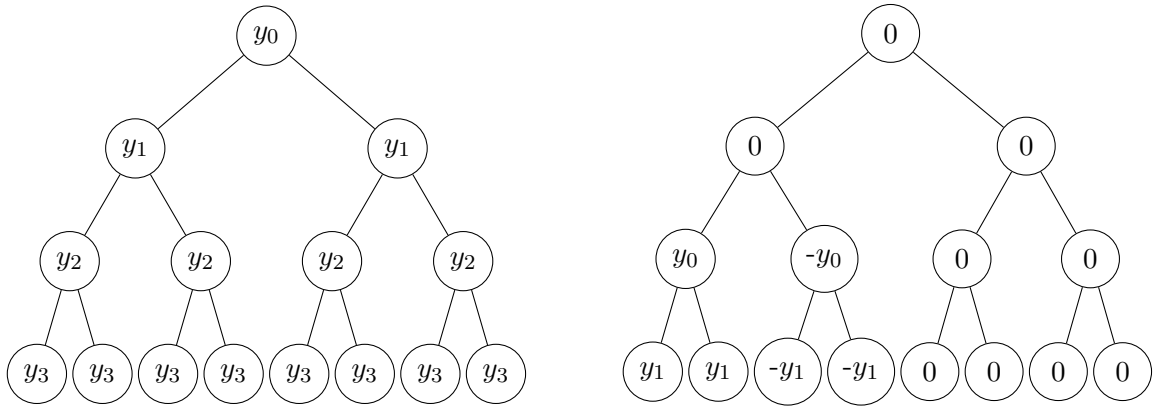
 
	\centering
	\begin{minipage}{.5\textwidth}
		\centering
\Tree
[.$y_0$     
[.$y_1$ 
[.$y_2$
[.$y_3$ ]
[.$y_3$ ]
]
[.$y_2$
[.$y_3$ ]
[.$y_3$ ]
]]
[.$y_1$
[.$y_2$ 
[.$y_3$ ]
[.$y_3$ ]]
[.$y_2$ 
[.$y_3$ ]
[.$y_3$ ]]]
]
%		\caption*{}
	\end{minipage}%
	\begin{minipage}{.5\textwidth}
		\centering
\Tree
[.0     
[.0 
[.$y_0$ 
[.$y_1$ ]
[.$y_1$ ]]
[.-$y_0$ 
[.-$y_1$ ]
[.-$y_1$ ]]]
[.0 
[.0 
[.0 ]
[.0 ]]
[.0 
[.0 ]
[.0 ]
]]
]
%		\caption*{Figure 2:  }
%		\label{fig:test2}
	\end{minipage}	
\caption{Completely symmetric eigenvectors (left) and Pseudo anti-symmetric eigenvectors (right)}
\label{fig:sym:antisym}
	\end{figure}

\begin{proof}
Assume that $E(\lambda)$ does not contain a completely symmetric eigenvector. Let $f$ be a nonzero element of $E(\lambda)$. Since $f$ is not completely symmetric, there exist vertices of the same level at which $f$ takes different values. Let $v$ be a vertex with the largest $l(v)$ such that there are at least two of its children, say the $i$-th and $j$-th children, at which $f$ has different values. For example, if there are two leaves $u$ and $w$ at which $f(u)\neq f(w)$ that have the same parent $v'$ then we simply take $v$ to be $v'$. 

By the choice of $v$, $f\vert _{\mathcal T_k^v}$ is completely symmetric for all $k\in [d]$. Indeed, let $u$ be the $k$-th child of  $v$. We have $\mathcal T_k^v = \mathcal T^{u}$. By the choice of $v$, $f$ takes the same value at all children of $u$. Let $u_1, u_2$ be two arbitrary children of $u$. Again by the choice of $v$, $f$ takes the same value, denoted by $f_1$, at all children of $u_1$, and the same value, denoted by $f_2$, at all children of $u_2$. Since $f$ is an eigenvector of $Q_h$, 
\begin{equation}\label{key}
\lambda f(u_1) = \frac{d}{d+1} f_1+\frac{1}{d+1} f(u) \quad \text{and}\quad \lambda f(u_2) = \frac{d}{d+1} f_2+\frac{1}{d+1} f(u). \nonumber
\end{equation}
Since $f(u_1) = f(u_2)$, $f_1 = f_2$. Thus, $f$ takes the same value at all grandchildren of $u$. Repeating this argument shows that  $f\vert _{\mathcal T^u}$ is completely symmetric.

Consider the vector $g$ obtained from $f$ by switching its values on $\mathcal T^{v}_{i}$ and $\mathcal T^{v}_{j}$. More specifically, $g\vert _{\mathcal T^{v}_{i}} = f\vert _{\mathcal T^{v}_{j}}$,  $g\vert _{\mathcal T^{v}_{j}} = f\vert _{\mathcal T^{v}_{i}}$, and $g = f$ elsewhere.
 
 By the symmetry of the tree and the matrix $Q_h$, $g$ also belongs to $E(\lambda)$. So is $f-g$, which we denote by $h$. Observe that $h$ is an eigenvector that is 0 everywhere except on $\mathcal T^{v}_{i} \cup \mathcal T^{v}_{j}$ and $h \vert_{\mathcal T_i^v}=f\vert_{\mathcal T_i^v} - f\vert_{\mathcal T_j^v}=-h \vert_{\mathcal T_j^v}$. Moreover, $h$ is completely symmetric when restricted to $\mathcal T^{v}_{i}$ and $ \mathcal T^{v}_{j}$ because both $f$ and $g$ are, as seen above. Thus, $h \in E(\lambda)$ and is pseudo anti-symmetric.
\end{proof}

\subsection{Completely symmetric eigenvectors}\label{subsection:sym}

In this section, we describe completely symmetric eigenvectors. We shall show that the completely symmetric eigenvectors of $Q_h$ are given by the formula \eqref{eq:thm:spectrum:sym} and correspond to $\lambda$ and $x$ satisfying \eqref{eq:lambda:x:thm} and \eqref{eq:x:sym:thm} as in Theorem \ref{thm:spectrum}.

Since a completely symmetric eigenvector of $Q_h$ has the same value at every node of the same level (see Figure \ref{fig:1}), we can project it onto the path $[0, h]$ and obtain an eigenvector of the corresponding random walk on the path.
 
\begin{figure}[H]\label{figure:sym:3}
	\centering
\begin{tikzpicture}
\Tree
[.$y_0$     
    [.$y_{1}$ 
    [.$y_{2}$ 
     [.$\ldots$ ]
    [.$\ldots$ ]]
    [.$y_{2}$ 
     [.$\ldots$ ]
    [.$\ldots$ ]]]
    [.$y_{1}$ 
      [.$y_2$ 
      [.$\ldots$ ]
      [.$\ldots$ ]]
    [.$y_2$ 
    [.$\ldots$ ]
      [.$\ldots$ ]
    ]]
] 
\end{tikzpicture}
\caption{Completely symmetric eigenvectors}
\label{fig:1}
\end{figure}
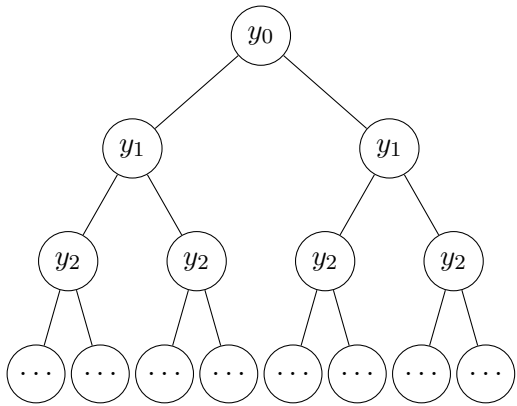

\begin{lemma} \label{lm:sym}
	There are exactly $h+1$ linearly independent completely symmetric eigenvectors of $Q_h$.
\end{lemma}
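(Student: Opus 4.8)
The plan is to reduce the problem to analyzing the random walk on the path $[0,h]$ and then count the dimension of its eigenspaces directly. First I would make precise the projection alluded to in the paragraph preceding the lemma: if $f$ is a completely symmetric eigenvector of $Q_h$ with eigenvalue $\lambda$, then writing $f(v) = y_i$ for all $v$ with $\ell(v) = i$, the eigenvalue equation $Q_h f = \lambda f$ collapses to a system in the variables $y_0,\dots,y_h$. Concretely, at the root one gets $\lambda y_0 = \frac{1}{d+1} y_0 + \frac{d}{d+1} y_1$, at an internal level $1 \le i \le h-1$ one gets $\lambda y_i = \frac{1}{d+1} y_{i-1} + \frac{d}{d+1} y_{i+1}$, and at the leaves $\lambda y_h = \frac{1}{d+1} y_{h-1} + \frac{d}{d+1} y_h$. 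This is exactly the eigenvalue problem for the transition matrix $\bar Q_h$ of a birth-and-death chain on $\{0,1,\dots,h\}$. Conversely, any solution $(y_0,\dots,y_h)$ of this system lifts to a completely symmetric eigenvector of $Q_h$ by setting $f(v) = y_{\ell(v)}$. Thus the space of completely symmetric eigenvectors of $Q_h$ (over all $\lambda$) is linearly isomorphic to the direct sum of all eigenspaces of $\bar Q_h$.

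Next I would argue that $\bar Q_h$ is diagonalizable, so that the sum of its eigenspaces is all of $\mathbb{R}^{h+1}$, which has dimension $h+1$. The cleanest route is to observe that $\bar Q_h$ is the transition matrix of an irreducible birth-and-death chain and is therefore reversible with respect to its stationary distribution $\pi$; hence $\bar Q_h$ is self-adjoint on $\ell^2(\pi)$ and so diagonalizable with $h+1$ real eigenvalues (counted with multiplicity, but in fact a tridiagonal matrix with nonzero off-diagonal entries has simple spectrum, which also suffices). Either way, the span of all eigenvectors of $\bar Q_h$ has dimension exactly $h+1$. Pulling this back through the isomorphism of the previous paragraph gives exactly $h+1$ linearly independent completely symmetric eigenvectors of $Q_h$, which is the claim.

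The main obstacle is not any single hard computation but rather making the correspondence airtight in both directions: one must check that the lift of a path-eigenvector is genuinely an eigenvector of $Q_h$ (this uses that every non-root, non-leaf vertex of $\mathcal{T}_h$ has one parent and $d$ children, and that a leaf has the prescribed holding probability, so the averaging that $Q_h$ performs on a symmetric $f$ depends only on $\ell(v)$), and that distinct path-eigenvectors lift to linearly independent functions on $V(\mathcal{T}_h)$ (immediate, since the lift map $(y_i) \mapsto (v \mapsto y_{\ell(v)})$ is injective). A minor point to handle carefully is the boundary rows of $\bar Q_h$ — the root row and the leaf row differ from the bulk rows — but these are precisely the rows coming from the special transition probabilities at $\rho$ and at the leaves, so they match $Q_h$ by construction. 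With these verifications in place the dimension count is immediate.
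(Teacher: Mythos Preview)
Your proposal is correct and follows essentially the same approach as the paper: project completely symmetric eigenvectors onto the birth-and-death chain on $[0,h]$ (the paper calls its transition matrix $R_h$), use reversibility to symmetrize and hence diagonalize it, and then lift the resulting $h+1$ independent eigenvectors back to $Q_h$. The only differences are cosmetic---you spell out the eigenvalue equations and the injectivity of the lift more explicitly, and you mention the tridiagonal simple-spectrum fact as an alternative, whereas the paper just invokes the conjugation $D^{1/2}R_hD^{-1/2}$ directly.
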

\begin{proof} Each symmetric eigenvector of $Q_h$ corresponds one-to-one to an eigenvector of the following projection onto the path $[0, h]$ with transition matrix $R_h$:
	\begin{itemize}
		\item $R_h(0, 1) = \frac{d}{d+1},  R_h(0, 0) = \frac{1}{d+1}$,
		\item $R_h(l, l-1) = \frac{1}{d+1}$, $R_h(l, l+1) = \frac{d}{d+1}$ for all $1\le l\le h-1$,
		\item $R_h(h, h-1) = \frac{1}{d+1}$, $R_h(h, h) = \frac{d}{d+1}$,
	\end{itemize}

	Since $R_h$ is a reversible transition matrix with stationary distribution $\pi := [1, d, d^{2}, \dots, d^{h}]$, the matrix $A:= D^{1/2} R_h D^{-1/2}$ is symmetric where $D$ is the diagonal matrix with $D(x, x)= \pi(x)$. Therefore, $A$ is diagonalizable and so is $R_h$. In other words, $R_h$ has $h+1$ independent real eigenvectors. This implies that $Q_h$ has $h+1$ linearly independent completely symmetric eigenvectors.
\end{proof}

\begin{lemma}\label{lm:sym:detail}
	The matrix $R_n$ has 1 as an eigenvalue with multiplicity 1. Each of the remaining $h$ eigenvalues $\lambda\neq 1$ of $R_n$ is of the form 
	$$\lambda = \frac{d}{d+1}\left (x+ \frac{1}{xd}\right )$$
	where $x \neq \pm \frac{1}{\sqrt{d}}$ is a non-real solution of the equation
	\begin{equation} 
	d^{h+1}x^{2h+2} = 1.\nonumber
	\end{equation}
	This equation has exactly $2h$ such solutions. If $x$ is a solution, so is $\frac{1}{xd}$. There is a 2-to-1 correspondence between $x$ and $\lambda$. An eigenvector $y = (y_0, y_1, \dots, y_h)$ of $R_h$ with respect to $\lambda$ is given by  
	\begin{equation} 
	y_i = \frac{dx^{2}-x}{dx^{2}-1}  x^i +  \frac{x-1}{dx^{2}-1}\frac{1}{d^{i} x^{i}} \quad\text{for every $0\le i\le h$}.\nonumber
	\end{equation}
	The vector $f:\mathcal T_h\to \R$ that takes value $y_i$ at all nodes of depth $i$ is an eigenvector of $Q_h$ with respect to $\lambda$.
\end{lemma}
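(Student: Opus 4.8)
\emph{Proof strategy.} Because $R_h$ is an irreducible stochastic matrix, $1$ is an eigenvalue of multiplicity one, with the all-ones vector as eigenvector; so fix $\lambda\neq1$ and analyze $R_hy=\lambda y$ for $y=(y_0,\dots,y_h)$ directly. The coordinate equations at interior levels $1\le l\le h-1$ form the three-term recurrence $\frac{d}{d+1}y_{l+1}-\lambda y_l+\frac{1}{d+1}y_{l-1}=0$, with characteristic polynomial $dX^2-(d+1)\lambda X+1$. Its two roots $x,x'$ satisfy $xx'=\frac1d$, so $x'=\frac{1}{xd}$, and $\lambda=\frac{d}{d+1}\bigl(x+\frac{1}{xd}\bigr)$, which is \eqref{eq:lambda:x:thm}; this is also what makes the substitution $x\leftrightarrow\frac1{xd}$ leave $\lambda$ unchanged.

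Assume first $x\neq\pm\frac{1}{\sqrt d}$, so the two roots are distinct and $y_l=Ax^l+B(xd)^{-l}$ for some constants $A,B$; normalize $y_0=1$, i.e.\ $A+B=1$. The level-$0$ equation $\frac{1}{d+1}y_0+\frac{d}{d+1}y_1=\lambda y_0$ gives a second linear relation between $A$ and $B$; solving the two (and using $(d+1)\lambda=dx+\frac1x$) yields $A=\frac{dx^2-x}{dx^2-1}$, $B=\frac{x-1}{dx^2-1}$, which is exactly the claimed formula for $y_i$. The remaining constraint is the level-$h$ equation $\frac{1}{d+1}y_{h-1}+\frac{d}{d+1}y_h=\lambda y_h$; substituting the formula and simplifying, a common factor $\frac{x-1}{dx-1}$ cancels (legitimately, since $\lambda\neq1$ forces $x\neq1$ and $x\neq\frac1d$), and what survives is precisely $d^{h+1}x^{2h+2}=1$, i.e.\ \eqref{eq:x:sym:thm}.

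For the converse and the counting, any $x\neq\pm\frac1{\sqrt d}$ solving $d^{h+1}x^{2h+2}=1$ yields, through that formula, a genuine $\lambda(x)$-eigenvector of $R_h$. The equation $x^{2h+2}=d^{-(h+1)}$ has the $2h+2$ roots $d^{-1/2}\zeta$, $\zeta^{2h+2}=1$; discarding $\zeta=\pm1$ (which give $x=\pm\frac1{\sqrt d}$) leaves exactly $2h$ roots, all non-real. Writing $x=d^{-1/2}e^{i\theta}$ gives $\lambda(x)=\frac{2\sqrt d}{d+1}\cos\theta$ and shows the involution $x\mapsto\frac1{xd}$ is complex conjugation here: it preserves the solution set and $\lambda$, grouping the $2h$ roots into $h$ conjugate pairs that produce $h$ \emph{distinct} real values $\lambda\neq1$. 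Since $R_h$ is diagonalizable with $h+1$ eigenvalues (Lemma \ref{lm:sym}), these $h$ values and $1$ are all of them, each simple; in particular, for $d\ge2$, the coincident-root values $\pm\frac{2\sqrt d}{d+1}$ are not eigenvalues (they equal neither $1$ nor any $\lambda(x)$, since $|\lambda(x)|<\frac{2\sqrt d}{d+1}$), so excluding $x=\pm\frac1{\sqrt d}$ costs nothing. Finally, lifting is direct: if $f$ is constant equal to $y_i$ on level $i$, then by the definition of $Q_h$ the vector $Q_hf$ is constant equal to $(R_hy)_i$ on level $i$ — checked separately at the root, at an interior node, and at a leaf — so $f$ is a $\lambda$-eigenvector of $Q_h$.

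The one genuinely fiddly step is the boundary algebra in the second paragraph: one must confirm that the level-$h$ equation, after substituting the explicit $y_i$, collapses to $d^{h+1}x^{2h+2}=1$ with no extraneous factor surviving, and keep careful track of the special values $x=1,\frac1d,\pm\frac1{\sqrt d}$ so that the final count of solutions and eigenvalues matches $\dim R_h=h+1$ exactly. Everything else is bookkeeping.
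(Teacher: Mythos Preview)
Your argument is correct and follows essentially the same route as the paper's: solve the three-term recurrence via its characteristic polynomial, fix the two free constants with the level-$0$ boundary, and use the level-$h$ boundary to extract $d^{h+1}x^{2h+2}=1$ after discarding the factors coming from $x=1,\,1/d$ (i.e.\ $\lambda=1$). Your explicit parametrization $x=d^{-1/2}e^{i\theta}$, $\lambda=\frac{2\sqrt d}{d+1}\cos\theta$ is a pleasant addition that makes the distinctness of the $h$ values and the exclusion of $\pm\frac{2\sqrt d}{d+1}$ transparent, but otherwise the proof matches the paper's.
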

\begin{proof}
	Let $\lambda$ be an eigenvalue of $R_h$ and $y = (y_0, y_1, \dots, y_h)$ be an eigenvector corresponding to $\lambda$. We have
	\begin{enumerate}[label=(R\arabic{*}), ref=R\arabic{*}]
		\item\label{eq:R:0} $\frac{d}{d+1} y_{1} +\frac{1}{d+1} y_{0} = \lambda y_0$,
		\item\label{eq:R:i}  $\frac{1}{d+1} y_{i-1} +\frac{d}{d+1} y_{i+1} = \lambda y_i$ for all $1\le i\le h-1$,
		\item\label{eq:R:h} $\frac{1}{d+1} y_{h-1} +\frac{d}{d+1} y_{h} = \lambda y_h$.
	\end{enumerate}

Since $y$ is not the zero vector, the above equations imply that $y_0\neq 0$. Without loss of generality, we assume $y_0=1$. 
	
	Let $x_1, x_2$ be the solutions to the characteristic equation of \eqref{eq:R:i}: 
	$$\frac{1}{d+1} - \lambda x + \frac{d}{d+1}x^{2} = 0$$
	or equivalently
	\begin{equation}\label{eq:x:lambda:1}
	d x^{2} - (d+1)\lambda x+1 = 0.
	\end{equation}
	
	By \eqref{eq:x:lambda:1}, we have
	$$x_1 x_2 = \frac{1}{d}$$
	and 
	\begin{equation} 
	\lambda = \frac{d}{d+1}(x_1+ x_2) = \frac{d}{d+1}\left (x_1+ \frac{1}{x_1 d}\right ) .\label{eq:lambda:x}
	\end{equation}

	If $x_1\neq x_2$ then we can write $y_0 = \alpha_1 - \alpha_2$, $y_1 = \alpha_1 x_1 -\alpha_2 x_2$ for some $\alpha_1, \alpha_2$. We show that for all $0\le i\le h$,
	\begin{equation}\label{eq:recurrent:y:1}
	y_i = \alpha_1 x_1^{i} - \alpha_2 x_2^{i}.
	\end{equation}
	Indeed, assuming that \eqref{eq:recurrent:y:1} holds for $y_0, \dots, y_i$ for some $1\le i\le h-1$ then by \eqref{eq:x:lambda:1},
	\begin{eqnarray}
	\lambda y_i - \frac{1}{d+1} y_{i-1} = \alpha_1 x_1^{i-1}\left (\lambda x_1 - \frac{1}{d+1}\right )-\alpha_2 x_2^{i-1}\left (\lambda x_2 - \frac{1}{d+1}\right ) = \frac{d}{d+1} \left (\alpha_1 x_1^{i+1}-  \alpha_2 x_2^{i+1}\right ).\nonumber
	\end{eqnarray}
	Thus, by \eqref{eq:R:i}, 
	\begin{equation}\label{key}
	\frac{d}{d+1} y_{i+1} = \frac{d}{d+1} \alpha_1 x_1^{i+1}- \frac{d}{d+1} \alpha_2 x_2^{i+1}\nonumber
	\end{equation}
	and so
	\begin{equation}\label{key}
	y_{i+1} = \alpha_1 x_1^{i+1}- \alpha_2 x_2^{i+1}.\nonumber
	\end{equation}
	Thus, \eqref{eq:recurrent:y:1} also holds for $y_{i+1}$ and hence, for all $y_0, \dots, y_h$. 
	
	Similarly, by \eqref{eq:R:h}, we get
	\begin{eqnarray}
	\frac{d}{d+1} y_h &=&\lambda y_h - \frac{1}{d+1} y_{h-1} = \alpha_1 x_1^{h-1}\left (\lambda x_1 - \frac{1}{d+1}\right )-\alpha_2 x_2^{h-1}\left (\lambda x_2 - \frac{1}{d+1}\right )\nonumber\\
	& =& \frac{d}{d+1} \left (\alpha_1 x_1^{h+1}-  \alpha_2 x_2^{h+1}\right ).\nonumber
	\end{eqnarray}
	Thus, 
	\begin{equation}\label{eq:R:h:1}
	\alpha_1 x_1^{h+1}-  \alpha_2 x_2^{h+1} = \alpha_1 x_1^{h}-  \alpha_2 x_2^{h}
	\end{equation}
	as they are both equal to $y_h$.

	By \eqref{eq:recurrent:y:1}, \eqref{eq:R:0} becomes
	\begin{equation}\label{eq:R:0:1}
	d(\alpha_1x_1 - \alpha_2 x_2) = \left (xd+\frac{1}{x} - 1\right ) (\alpha_1 - \alpha_2).
	\end{equation}

	For simplicity, we write $\alpha = \alpha_1$ and $x = x_1$. By \eqref{eq:recurrent:y:1} for $i=0$, we get
	$$\alpha_2 = \alpha-1.$$

	Equations \eqref{eq:R:0:1} becomes
	\begin{equation} 
	d\alpha x -  \frac{\alpha-1}{x} =  dx+\frac{1}{x} - 1\nonumber
	\end{equation}
	which gives
	\begin{equation}\label{eq:R:0:2}
	\alpha_1 = \alpha = \frac{dx^{2}-x}{dx^{2}-1} \quad\text{and}\quad \alpha_2 = \alpha-1 = \frac{1-x}{dx^{2}-1}. 
	\end{equation}

	Plugging \eqref{eq:R:0:2} into \eqref{eq:R:h:1} and taking into account $x_2 = \frac{1}{xd}$, we get
	\begin{equation}\label{key}
	(dx-1)(x-1)(d^{h+1}x^{2h+2}-1) = 0.\nonumber
	\end{equation}
	
	If $x = 1$ then $\alpha_2 = \alpha-1 = 0$ by \eqref{eq:R:0:2}. And so, $y = \alpha(1, \dots, 1)$ which is an eigenvector of the eigenvalue 1. Since $\lambda\neq 1$, $x\neq 1$. If $x=\frac{1}{d}$ then $x_2 = \frac{1}{xd} = 1$. By the symmetry of $x_1$ and $x_2$, this also corresponds to $\lambda=1$ which is not the case. 
	
	Thus, $x$ satisfies
	\begin{equation} 
	d^{h+1}x^{2h+2}-1=0.\nonumber
	\end{equation}

	This equation has $2h$ non-real solutions and 2 real solutions $\pm \frac{1}{\sqrt{d}}$. For each non-real solution $x_1$, observe that $x_2:=\frac{1}{dx_1}$ is also a non-real solution. Note that $x_1\neq x_2$ and by setting $\lambda$ and $y$ as in \eqref{eq:lambda:x} and \eqref{eq:recurrent:y:1} with $\alpha_1$ and $\alpha_2$ as in \eqref{eq:R:0:2}, one can check that $y$ is indeed an eigenvector corresponding to $\lambda$. Thus, these $2h$ non-real solutions correspond to exactly $h$ eigenvalues $\lambda\neq 1$ of $R_n$. Since $R_n$ has exactly $h+1$ eigenvalues, these are all.
\end{proof}

\subsection{Pseudo anti-symmetric eigenvectors}\label{subsection:anti}
In this section, we describe pseudo anti-symmetric eigenvectors. We shall show that the pseudo anti-symmetric eigenvectors of $Q_h$ are given by the formula \eqref{eq:thm:spectrum:antisym} and correspond to $\lambda$ and $x$ satisfying \eqref{eq:lambda:x:thm} and \eqref{eq:x:antisym} as in Theorem \ref{thm:spectrum}.

Consider a pseudo anti-symmetric eigenvector $f$  with node $v$ and indices $i, j$ as described in Lemma \ref{lm:sym:antisym} (see Figure \ref{fig:sym:antisym}). Let $k = h-\ell(v)-1\in [0, h-1]$. As in Figure \ref{fig:sym:antisym} and Figure \ref{fig:anti}, let $y=(y_0, y_1, \dots, y_k)$ where $y_0$ is the value of $f$ at the $i$-th child of $v$, which is denoted by $u$, $y_1$ is the value of $f$ at the children of $u$ and so on. With these notations, we also write $f$ as $f_{y, v, i, j}$. Observe that $y$ is an eigenvector of the following matrix $S_k$:
\begin{itemize}
	\item $S_k(0, 1) = \frac{d}{d+1}$,
	\item $S_k(l, l-1) = \frac{1}{d+1}$, $S_k(l, l+1) = \frac{d}{d+1}$ for all $1\le l\le k-1$,
	\item $S_k(k, k-1) = \frac{1}{d+1}$, $S_k(k, k) = \frac{d}{d+1}$.
\end{itemize}

Reversely, for any eigenvector $y$ of $S_k$, for any node $v$ at depth $h-k-1$ and for any choice of $i, j\in [1, d]$ with $i\neq j$, we can lift it to a pseudo anti-symmetric eigenvector $f_{y, v, i, j}$.

\begin{figure}[H]	\centering
	\begin{tikzpicture}
	\Tree
	[.0
	[.$0$     
	[.$y_0$ 
	[.$y_1$ 
	[.$y_2$ ]
	[.$y_2$ ]]
	[.$y_1$ 
	[.$y_2$ ]
	[.$y_2$ ]]]
	[.$-y_{0}$ 
	[.$-y_1$ 
	[.$-y_2$ ]
	[.$-y_2$ ]]
	[.$-y_1$ 
	[.$-y_2$ ]
	[.$-y_2$ ]
	]]
	]
	[.$0$     
	[.$0$ 
	[.$0$ 
	[.$0$ ]
	[.$0$ ]]
	[.$0$ 
	[.$0$ ]
	[.$0$ ]]]
	[.$0$ 
	[.$0$ 
	[.$0$ ]
	[.$0$ ]]
	[.$0$ 
	[.$0$ ]
	[.$0$ ]
	]]
	]
	]
	\end{tikzpicture}
	\caption{Pseudo anti-symmetric eigenvectors}\label{fig:anti}

\end{figure}
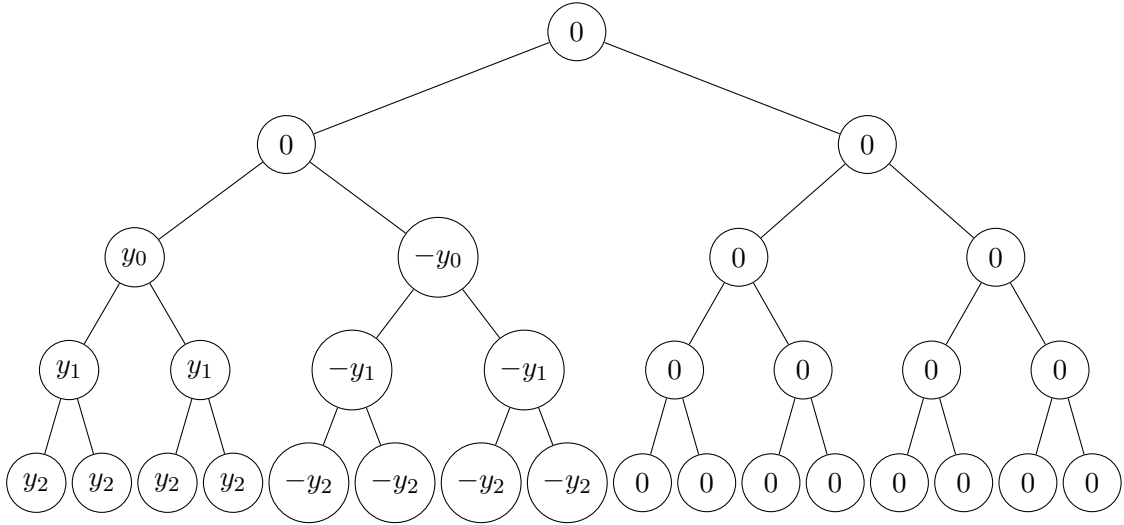

  \begin{lemma} \label{lm:antisym}
	For each $k\in [0, h-1]$, $S_k$ has $k+1$ eigenvectors. For each eigenvector $y$ of $S_k$ and for each $v$ with $l(v) = h-k-1$, there are $d-1$ linearly independent pseudo anti-symmetric eigenvectors of $Q_h$ of the form $f_{y,v,i,j}$.
\end{lemma}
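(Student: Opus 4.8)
I would organize the proof in three steps, corresponding to (i) diagonalizability of $S_k$, (ii) showing that each lift $f_{y,v,i,j}$ is genuinely an eigenvector of $Q_h$, and (iii) the dimension count.

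\emph{Diagonalizability of $S_k$.} The matrix $S_k$ is not stochastic (its $0$-th row sums to $\tfrac{d}{d+1}$), but it is tridiagonal with $S_k(l,l+1)\,S_k(l+1,l)=\tfrac{d}{(d+1)^2}>0$ for every $l$ for which both entries are defined. Hence the argument of Lemma~\ref{lm:sym} applies: conjugating $S_k$ by the diagonal matrix $D$ with $D(l,l)=d^{l/2}$ yields a symmetric matrix $D S_k D^{-1}$, the choice of $D$ being forced by requiring $(D S_k D^{-1})_{l,l+1}=(D S_k D^{-1})_{l+1,l}$, i.e.\ $D(l+1,l+1)/D(l,l)=\sqrt{d}$. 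A real symmetric matrix is diagonalizable over $\R$, so $S_k$ has $k+1$ linearly independent eigenvectors.

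\emph{Lifting.} Fix an eigenvector $y=(y_0,\dots,y_k)$ of $S_k$ with $S_k y=\lambda y$, a node $v$ with $\ell(v)=h-k-1$, and distinct $i,j\in[1,d]$; write $f:=f_{y,v,i,j}$. I would check $Q_h f=\lambda f$ node by node. At $w\in\mathcal T_i^v$ of within-subtree depth $1\le m\le k-1$, the equation reduces to $\lambda y_m=\tfrac{1}{d+1}y_{m-1}+\tfrac{d}{d+1}y_{m+1}$, which is row $m$ of $S_k$; at a leaf of $\mathcal T_i^v$ it reduces to row $k$; at the $i$-th child $u$ of $v$ it reduces to $\lambda y_0=\tfrac{1}{d+1}f(v)+\tfrac{d}{d+1}y_1=\tfrac{d}{d+1}y_1$, which is row $0$ of $S_k$ precisely because $f(v)=0$. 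The same computations hold on $\mathcal T_j^v$ with $-y$ replacing $y$. The only further nodes with a neighbour where $f\neq0$ are $v$, the parent of $v$, and the remaining children of $v$; at $v$ the right-hand side equals $\tfrac{1}{d+1}\bigl(y_0+(-y_0)\bigr)=0=\lambda f(v)$ — this cancellation is exactly what the pseudo anti-symmetric shape provides — and at the other nodes every relevant value is $0$. The degenerate cases $k=0$ (so $u$ is a leaf) and $v=\rho$ are the same computations together with a self-loop term that is harmless since $f$ vanishes at those spots. Since $y\neq0$, also $f\neq0$, so $f$ is a genuine eigenvector of $Q_h$, and it is pseudo anti-symmetric by construction.

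\emph{Counting.} For fixed $y$ and $v$, let $g_m$ (for $m\in[1,d]$) be the vector equal to the symmetric lift of $y$ on $\mathcal T_m^v$ and $0$ elsewhere; then $f_{y,v,i,j}=g_i-g_j$, and $g_1,\dots,g_d$ are linearly independent since their supports are pairwise disjoint. Consequently the vectors $f_{y,v,i,j}$ span $\bigl\{\sum_{m=1}^d c_m g_m:\sum_m c_m=0\bigr\}$, a space of dimension $d-1$; concretely $f_{y,v,1,2},f_{y,v,2,3},\dots,f_{y,v,d-1,d}$ are $d-1$ linearly independent such eigenvectors, and no larger independent family exists. I expect the only mildly delicate point to be the bookkeeping in the node-by-node verification — in particular matching the degenerate $0$-th row of $S_k$ with the vanishing of $f$ at $v$, and handling the root and leaf edge cases — rather than anything conceptually hard.
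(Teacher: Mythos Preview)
Your proof is correct and follows essentially the same approach as the paper: the same diagonal conjugation $D S_k D^{-1}$ with $D(l,l)=d^{l/2}$ to symmetrize $S_k$, and the same family $f_{y,v,i,i+1}$, $1\le i\le d-1$, of independent lifts. You supply two details the paper leaves implicit --- the node-by-node verification that each lift is an eigenvector of $Q_h$ (which the paper asserts without proof just before the lemma) and the linear-independence argument via the disjoint-support vectors $g_m$ --- but these are expansions of, not departures from, the paper's argument.
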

\begin{proof}
	Since $S_k$ differs from $R_k$ only at the $(0, 0)$ entry, it also satisfies the equation $\pi(x) S_k(x, y) = \pi(y)S_k(y, x)$ where $\pi = [1, d, d^{2}, \dots, d^{k}]$. Thus, like $R_k$, the matrix $DS_k D^{-1}$ is symmetric where $D$ is the diagonal matrix with $D(x, x) = \pi(x)^{1/2}$. By symmetry, $D S_k D^{-1}$ has $k+1$ eigenvalues and so does $S_k$. 
	
	For each eigenvector $y$ of $S_k$, we create $d-1$ independent vectors $f_{y, v, i, i+1}$ for $1\le i\le d-1$. It is clear that any $f_{y, v, i, j}$ can be written as a linear combination of these vectors. This completes the proof.
\end{proof}
We now describe the eigenvectors of $S_k$.
\begin{lemma}\label{lm:anti:detail}
	Each of the $k+1$ eigenvalue $\lambda$ of $S_k$ is of the form 
	$$\lambda = \frac{d}{d+1}\left (x+ \frac{1}{dx}\right )$$
	where $x\neq \pm \frac{1}{\sqrt d}$ is a solution of the equation
	\begin{equation} 
	d^{k+2} x^{2k+4}- d^{k+2} x^{2k+3}+ dx -1 = 0.\nonumber
	\end{equation}
	This equation has $2k+2$ solutions that differ from $\frac{1}{\sqrt d}$. If $x$ is a solution, so is $\frac{1}{dx}$. There is a 2-to-1 correspondence between $x$ and $\lambda$. An eigenvector $y = (y_0, y_1, \dots, y_k)$ of $S_k$ with respect to $\lambda$ is given by  
	\begin{equation} 
	y_i = \frac{dx^{i+2}}{dx^{2}-1} - \frac{1}{(dx^{2}-1)d^{i} x^{i}} \quad\text{for every $0\le i\le k$}.\nonumber
	\end{equation}
\end{lemma}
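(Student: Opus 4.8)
The plan is to mirror the proof of Lemma~\ref{lm:sym:detail}, the only change being the boundary equation of $S_k$ at the node~$0$. Let $\lambda$ be an eigenvalue of $S_k$ with eigenvector $y=(y_0,\dots,y_k)$, so that $\frac{d}{d+1}y_1=\lambda y_0$, that $\frac{1}{d+1}y_{i-1}+\frac{d}{d+1}y_{i+1}=\lambda y_i$ for $1\le i\le k-1$, and that $\frac{1}{d+1}y_{k-1}+\frac{d}{d+1}y_k=\lambda y_k$; these differ from the $R_k$-equations only in that the first has no $y_0$ term. As there, one checks first that $y_0\neq 0$ (if $y_0=0$, the three equations force $y\equiv 0$), so normalise $y_0=1$. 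The interior recurrence has characteristic equation $dx^2-(d+1)\lambda x+1=0$, with roots $x_1=x$ and $x_2=\frac{1}{dx}$, product $\frac1d$, and $\lambda=\frac{d}{d+1}\bigl(x+\frac1{dx}\bigr)$. Assuming $x\neq\pm\frac1{\sqrt d}$, so that $x_1\neq x_2$, every solution of the interior recurrence equals $y_i=\beta_1x^i+\beta_2(dx)^{-i}$ for all indices $i$; imposing $y_0=1$ together with the first equation (which then reads $y_1=x+\frac1{dx}$) gives $\beta_1=\frac{dx^2}{dx^2-1}$ and $\beta_2=-\frac{1}{dx^2-1}$, and this is exactly the claimed formula for $y$.

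Next I would impose the last equation. Since $y_i=\beta_1x^i+\beta_2(dx)^{-i}$ solves the interior recurrence for \emph{every} index, we also have $\frac{1}{d+1}y_{k-1}+\frac{d}{d+1}y_{k+1}=\lambda y_k$; subtracting this from the last equation shows it is equivalent to $y_{k+1}=y_k$. Substituting the formula, clearing the factor $dx^2-1$, and multiplying through by $(dx)^{k+1}$ turns $y_{k+1}=y_k$ into
\[
d^{k+2}x^{2k+4}-d^{k+2}x^{2k+3}+dx-1=0,
\]
which is \eqref{eq:x:antisym}. Each step reverses, so conversely, for any root $x\neq\pm\frac1{\sqrt d}$ of \eqref{eq:x:antisym}, the vector $y$ given by the displayed formula satisfies all three eigenvalue equations and hence is an eigenvector of $S_k$ with eigenvalue $\frac{d}{d+1}\bigl(x+\frac1{dx}\bigr)$. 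The degenerate case $x_1=x_2$, i.e.\ $x=\pm\frac1{\sqrt d}$, must be treated on its own: there the general solution of the recurrence is $(\alpha+\beta i)x^i$, and imposing $y_0=1$, the first equation, and $y_{k+1}=y_k$ forces $\sqrt d=\frac{k+2}{k+1}$ (for $x=\frac1{\sqrt d}$) or an impossible sign identity (for $x=-\frac1{\sqrt d}$), so this case produces no eigenvalue (the constraint $\sqrt d=\frac{k+2}{k+1}$ having no solution in integers $d\ge2$ except the trivial instance $k=0$, $d=4$, where $S_k$ is $1\times1$).

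It remains to count. The polynomial in \eqref{eq:x:antisym} has degree $2k+4$; a direct substitution shows both $\pm\frac1{\sqrt d}$ are roots, so $dx^2-1$ divides it, with a quotient $Q$ of degree $2k+2$, and one checks that $Q$ does not vanish at $\pm\frac1{\sqrt d}$; hence \eqref{eq:x:antisym} has exactly $2k+2$ solutions $x\neq\pm\frac1{\sqrt d}$. If $x$ is one of them then so is $\frac1{dx}$ (substitute $\frac1{dx}$ and multiply by $d^{k+2}x^{2k+4}$ to recover $-1$ times the original polynomial), with $x\neq\frac1{dx}$, and $x$ and $\frac1{dx}$ give the same $\lambda$; moreover distinct such pairs give distinct $\lambda$, since the pair is recovered from $\lambda$ as the root set of $t^2-\frac{d+1}{d}\lambda t+\frac1d$. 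So these $2k+2$ solutions split into $k+1$ pairs, yielding $k+1$ distinct eigenvalues of $S_k$ by the second paragraph; since $S_k$ is diagonalisable with exactly $k+1$ eigenvalues (Lemma~\ref{lm:antisym}), these are all of them, each simple, the $x\mapsto\lambda$ correspondence is $2$-to-$1$, and the stated eigenvector formula holds in every eigenspace.

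The whole thing is a routine adaptation of Lemma~\ref{lm:sym:detail}, so the genuine work sits in two places: the algebra reducing $y_{k+1}=y_k$ to \eqref{eq:x:antisym}, and --- the point I would watch most carefully --- checking that $\pm\frac1{\sqrt d}$ are the only ``spurious'' roots, i.e.\ that \eqref{eq:x:antisym} really has $2k+2$ admissible roots, so that the $k+1$ eigenvalues they produce neither miss nor double-count any eigenvalue of $S_k$.
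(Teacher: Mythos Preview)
Your proof is correct and follows essentially the same approach as the paper's, which likewise writes $y_i=\alpha_1 x_1^i-\alpha_2 x_2^i$, solves for $\alpha_1,\alpha_2$ from the boundary condition at $0$, and reduces the boundary condition at $k$ to the polynomial \eqref{eq:x:antisym}, referring back to Lemma~\ref{lm:sym:detail} for the identical steps. You are in fact more thorough than the paper: it does not explicitly treat the degenerate case $x_1=x_2$ or verify that $\pm\frac{1}{\sqrt d}$ are simple roots of the polynomial (your observation that the double-root condition is exactly $\sqrt d=\frac{k+2}{k+1}$, hence only $k=0$, $d=4$, is a nice check that the paper omits).
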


\begin{proof}
	Let $\lambda$ be an eigenvalue of $S_k$ and $y = (y_0, y_1, \dots, y_k)$ be an eigenvector corresponding to $\lambda$. We have
	\begin{enumerate}[label=(S\arabic{*}), ref=S\arabic{*}]
		\item\label{eq:S:0} $\frac{d}{d+1} y_{1} = \lambda y_0$,
		\item\label{eq:S:i}  $\frac{1}{d+1} y_{i-1} +\frac{d}{d+1} y_{i+1} = \lambda y_i$ for all $1\le i\le k-1$,
		\item\label{eq:S:k} $\frac{1}{d+1} y_{k-1} +\frac{d}{d+1} y_{k} = \lambda y_k$.
	\end{enumerate}
	
	As before, we let $x_1, x_2$ be the solutions to the equation 
	$$\frac{1}{d+1} - \lambda x + \frac{d}{d+1}x^{2} = 0.$$
	By exactly the same argument as in the proof of Lemma \ref{lm:sym:detail}, we derive by setting $y_0=1$ that 
	$$y_i = \alpha_1 x_1^{i} - \alpha_2 x_2^{i}$$
where
	$$\alpha_1 = \frac{dx^{2}}{dx^{2}-1}\quad\text{and}\quad \alpha_2 =\frac{1}{dx^{2}-1}$$
	and $x_1$ and $x_2$ satisfy
	\begin{equation}\label{eq:x:2}
	d^{k+2} x^{2k+4}- d^{k+2} x^{2k+3}+ dx -1 = 0
	\end{equation}
	
 	Note that, $x = \pm \frac{1}{\sqrt d}$ are solutions of \eqref{eq:x:2}. The remaining $2k+2$ solutions split into pairs $(x, \frac{1}{dx})$ of distinct components. For each of these pairs, let $x_1 := x$ and $x_2:=\frac{1}{dx}$. We have $x_1\neq x_2$ and by setting $\lambda$ and $y$ as in \eqref{eq:lambda:x} and \eqref{eq:recurrent:y:1} with $\alpha_1 = \frac{dx^{2}}{dx^{2}-1}$ and $\alpha_2 =\frac{1}{dx^{2}-1}$, one can check that $y$ is indeed an eigenvector corresponding to $\lambda$. Thus, these $2k+2$ solutions correspond to exactly $k+1$ eigenvalues $\lambda$ of $S_k$. Since $S_k$ has exactly $k+1$ eigenvalues, these are all.
\end{proof}

\subsection{Proof of Theorem \ref{thm:spectrum}}\label{subsection:proof:spectrum}
The following lemma shows that we can retrieve all eigenvectors of $Q_h$ from completely symmetric and pseudo anti-symmetric eigenvectors. Let $\mathcal A_{S_k}$ be the eigenbasis of $S_k$ as described in Lemma \ref{lm:anti:detail} and $\mathcal B$ be a collection of $h+1$ independent completely symmetric eigenvectors of $Q_h$ as in Lemma \ref{lm:sym:detail}. Let 
$$\mathcal A: = \lbrace f_{y, v, i, i+1}, v \in V(\mathcal T_{h-1}), y \in \mathcal A_{S_{h-\ell(v)-1}}, i \in [d-1] \rbrace.$$
\begin{lemma}\label{lm:spanning}
	The collection $\mathcal A \cup \mathcal B$ is an eigenbasis for $Q_h$.
\end{lemma}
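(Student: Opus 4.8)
The plan is to show that $\mathcal{A}\cup\mathcal{B}$ is a linearly independent set of exactly $n:=|V(\mathcal{T}_h)|$ vectors; since each element of $\mathcal{B}$ is an eigenvector of $Q_h$ by Lemma~\ref{lm:sym:detail} and each element of $\mathcal{A}$ is an eigenvector of $Q_h$ by Lemmas~\ref{lm:antisym} and~\ref{lm:anti:detail}, this makes $\mathcal{A}\cup\mathcal{B}$ an eigenbasis and, as a byproduct, shows $Q_h$ is diagonalizable (finishing parts (a) and (c) of Theorem~\ref{thm:spectrum}). The cardinality part is routine. The vectors in $\mathcal{A}\cup\mathcal{B}$ are pairwise distinct: a pseudo anti-symmetric $f_{y,v,i,i+1}$ is zero at $\rho$ while every element of $\mathcal{B}$ is nonzero at $\rho$, and $f_{y,v,i,i+1}$ is recovered from its support (whose deepest common ancestor is $v$), from the sign pattern on the two nonzero branches ($i$), and from the branch values ($y$). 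Moreover
\[
|\mathcal{A}\cup\mathcal{B}| \;=\; (h+1) + (d-1)\sum_{\ell=0}^{h-1} d^{\ell}(h-\ell) \;=\; \frac{d^{h+1}-1}{d-1} \;=\; n ,
\]
where $d^{\ell}$ counts the vertices $v$ with $\ell(v)=\ell$, the factor $h-\ell=|\mathcal{A}_{S_{h-\ell-1}}|$ counts the choices of $y$, and $d-1$ counts the choices of $i$; the middle identity is a one-line geometric-series computation.

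The main tool for independence is the symmetrization operator $\mathrm{Sym}$ on $\R^{V(\mathcal{T}_h)}$ obtained by averaging a vector over $\mathrm{Aut}(\mathcal{T}_h)$. Since $\mathrm{Aut}(\mathcal{T}_h)$ is transitive on each level, $\mathrm{Sym}$ is the idempotent projection onto the space of completely symmetric vectors, which has dimension $h+1$; hence $\mathcal{B}$ is a basis of $\mathrm{Im}\,\mathrm{Sym}$. On the other hand, for a pseudo anti-symmetric $f=f_{y,v,i,i+1}$ there is an automorphism $\sigma$ of $\mathcal{T}_h$ interchanging $\mathcal{T}^v_i$ with $\mathcal{T}^v_{i+1}$ and fixing everything else, and one checks $f\circ\sigma=-f$, so averaging gives $\mathrm{Sym}(f)=0$; thus $\mathrm{span}\,\mathcal{A}\subseteq\ker\mathrm{Sym}$. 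Applying $\mathrm{Sym}$ to any vanishing linear combination of $\mathcal{A}\cup\mathcal{B}$ annihilates the $\mathcal{A}$-part and leaves a vanishing combination of the basis $\mathcal{B}$, forcing all $\mathcal{B}$-coefficients to vanish. It therefore suffices to prove that $\mathcal{A}$ alone is linearly independent.

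For this I would run a "lowest bad vertex" induction. Suppose $\sum_{v,i,y} c_{v,i,y}\,f_{y,v,i,i+1}=0$ with some $c_{v,i,y}\neq 0$; let $\ell_0$ be the minimum of $\ell(v)$ over vertices carrying a nonzero coefficient, and fix such a $v_0$ with $\ell(v_0)=\ell_0$. Restrict the identity to the child-subtree $\mathcal{T}^{v_0}_j$ for $j=1,2,\dots,d-1$ in turn. The terms that can be nonzero somewhere on $\mathcal{T}^{v_0}_j$ fall into three types: (i) $f_{y,v_0,j-1,j}$ and $f_{y,v_0,j,j+1}$, whose restrictions to $\mathcal{T}^{v_0}_j$ are $\pm$ the completely symmetric lifts of the corresponding $y$'s; (ii) terms $f_{y,v,i,i+1}$ with $v$ strictly inside $\mathcal{T}^{v_0}_j$, which are pseudo anti-symmetric as vectors on $\mathcal{T}^{v_0}_j$; and (iii) terms with $\ell(v)<\ell_0$, whose coefficients vanish by the minimality of $\ell_0$. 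Averaging over $\mathrm{Aut}(\mathcal{T}^{v_0}_j)$ kills the type-(ii) terms, and the inductive hypothesis $c_{v_0,j-1,\,\cdot}=0$ (vacuous for $j=1$) removes the $f_{y,v_0,j-1,j}$ contribution, so only the $f_{y,v_0,j,j+1}$ part survives, yielding $\sum_y c_{v_0,j,y}\,(\text{lift of }y)=0$. Since the elements of $\mathcal{A}_{S_{h-\ell_0-1}}$ are linearly independent and lifting is injective, $c_{v_0,j,y}=0$ for all $y$. Running $j$ from $1$ to $d-1$ forces all $c_{v_0,i,y}=0$, contradicting the choice of $v_0$; hence $\mathcal{A}$ is independent, and $\mathcal{A}\cup\mathcal{B}$ is an eigenbasis.

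The one place that demands care is the classification of surviving terms in the last step: correctly enumerating, for a fixed $\mathcal{T}^{v_0}_j$, exactly which $f_{y,v,i,i+1}$ have support meeting it, and confirming that each swap automorphism used to annihilate type-(ii) terms is a genuine automorphism of the subtree on which $\mathrm{Sym}$ is applied. The remaining ingredients — distinctness, the geometric-series count, and the fact that group-averaging is an idempotent projection onto the invariant subspace — are routine, so I do not anticipate any further obstacle.
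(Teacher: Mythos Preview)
Your proof is correct and follows essentially the same strategy as the paper: count to $n$, separate $\mathcal{B}$ from $\mathcal{A}$ using the fact that pseudo anti-symmetric vectors have zero level-sums (you phrase this as $\mathrm{Sym}(f)=0$; the paper calls such vectors \emph{energy-preserving} and records the same fact as Observation~\ref{obs}), then peel off the $\mathcal{A}$-coefficients by restricting to successive child-subtrees and applying the same symmetry/level-sum observation locally. Your ``lowest bad vertex'' contradiction with inner induction on $j$ is just a reorganization of the paper's global induction on pairs $(v,i)$ ordered first by $\ell(v)$ and then by $i$.
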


Assuming Lemma \ref{lm:spanning}, we now put everything together to complete the proof of Theorem \ref{thm:spectrum}.
\begin{proof}[Proof of Theorem \ref{thm:spectrum}]
	The first part of the theorem follows from Lemmas \ref{lm:sym:detail} and \ref{lm:anti:detail}. As seen in Lemma \ref{lm:sym:detail}, the set $\mathcal B$ in Lemma \ref{lm:spanning} consists of eigenvectors as in \eqref{eq:thm:spectrum:sym} and the all-1 vector. By Lemmas \ref{lm:antisym} and \ref{lm:anti:detail}, the set $\mathcal A$ consists of eigenvectors as in \eqref{eq:thm:spectrum:antisym}. That gives the second part. Finally, the third part follows from Lemma \ref{lm:spanning}.
\end{proof}

Before proving Lemma \ref{lm:spanning}, we make the following simple observation. For a rooted-tree $T$ that is not necessarily regular, recall that a vector $f: T\to \R$ is said to be \textit{completely symmetric} if $f(u) = f(v)$ for all pairs of vertices $u, v$ at the same level. A vector $f$ is said to be \textit{energy-preserving} if for all level $l$, 
$$\sum_{v\in T: l(v)=l} f(v)=0.$$
\begin{observation}\label{obs}
	For any rooted-tree $T$ and any vector $f: T\to \R$, if $f$ is both energy-preserving and completely symmetric then it is the zero vector.
\end{observation}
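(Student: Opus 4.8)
\textbf{Proof proposal for Observation \ref{obs}.} The plan is a direct argument, level by level. Fix a level $l$ for which $T$ has at least one vertex at distance $l$ from the root (every nonempty level satisfies this, and in particular level $0$ does, since the root lies there). Since $f$ is completely symmetric, $f$ takes a single common value, call it $c_l$, at every vertex $v$ with $\ell(v) = l$. Let $n_l \ge 1$ be the number of such vertices. Then the energy-preserving hypothesis gives
$$0 = \sum_{v \in T:\ \ell(v) = l} f(v) = n_l\, c_l,$$
and since $n_l \ge 1$ we conclude $c_l = 0$. Hence $f$ vanishes at every vertex of level $l$.

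As $l$ ranges over all levels that contain at least one vertex of $T$ — which is to say, over all of $V(T)$ — we get $f(v) = 0$ for every $v \in V(T)$, i.e.\ $f$ is the zero vector. There is no real obstacle here: the only thing to notice is that each nonempty level contributes a nonzero count $n_l$, so that dividing the identity $n_l c_l = 0$ by $n_l$ is legitimate; the statement then follows immediately from combining complete symmetry (constancy on levels) with the energy-preserving condition (zero sum on levels).
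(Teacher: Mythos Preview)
Your argument is correct and is exactly the intended one: the paper states this as an observation without proof, and your level-by-level computation (constant value $c_l$ on level $l$ by complete symmetry, $n_l c_l = 0$ by energy preservation, hence $c_l = 0$) is precisely the trivial verification the authors have in mind.
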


 \begin{proof}[Proof of Lemma \ref{lm:spanning}]
First of all, we check that their number is equal to $n$. By Lemmas \ref{lm:sym} and \ref{lm:antisym}, the total number of vectors is
\begin{align*}
h+1+  \sum_{k=0}^{h-1} (k+1)(d-1) d^{h-k-1} 
\end{align*}
where $d^{h-k-1}$ is the number of nodes $v$ of depth $h-k-1$. By algebraic manipulation, this number is exactly $\frac{d^{h+1} -1}{d-1}=n$.

We will now prove that the vectors considered are linearly independent. Assume that there exist coefficients $c_{y,v,i}$ and $c_g$ such that
\begin{equation} 
\sum c_{y,v,i} f_{y,v,i,i+1} + \sum_{g\in \mathcal B} c_g g = 0\nonumber
\end{equation}
where the first sum runs over all $v \in V(\mathcal T_{h-1}), y \in \mathcal A_{S_{h-\ell(v)-1}}, i \in [d-1]$. We need to show that $ c_{y,v,i}$ and $c_g$ are all 0.

Since pseudo anti-symmetric vectors are energy-preserving on $\mathcal T_{h}$, the sum $\sum_{g\in \mathcal B} c_g g = -\sum c_{y,v,i} f_{y,v,i,i+1}$ is both completely symmetric and energy-preserving. And so, by Observation \ref{obs},
\begin{equation}\label{eq:indep:sum}
\sum c_{y,v,i} f_{y,v,i,i+1} = \sum_{g\in \mathcal B} c_g g = 0
\end{equation}
By the independence of vectors in $\mathcal B$, we conclude that $c_g = 0$ for all $g\in \mathcal B$. 
 
We now prove by induction on the vertices of $v\in V(\mathcal T_{h-1})$ and $i\in [d-1]$ that $c_{y,v,i} = 0$ for all $y\in \mathcal A_{S_{h-\ell(v)-1}}$. For this induction, we shall use the natural ordering of pairs $(v, i)$ as follows.
$$(v, i)< (v', i')\quad \text{if and only if} \quad l(v)< l(v') \text{ or } l(v) = l(v') \text{ and } i< i'.$$

 For the base case, which is for $v := \rho$ and $i:=1$, from \eqref{eq:indep:sum}, we have
 \begin{equation}\label{key}
F_{\rho, 1}:= \sum_{y\in \mathcal A_{S_{h-1}}}  c_{y,\rho,1} f_{y,\rho,1,2} = -\sum c_{y,u,j} f_{y,u,j,j+1} \nonumber
 \end{equation}
 where the second sum runs over all $u \in V(\mathcal T_{h-1})$ and $j \in [d-1]$ with $(\rho, 1)< (u, j)$ and all $y \in \mathcal A_{S_{h-\ell(v)-1}}$. Note that when restricting on the subtree $\mathcal T_{1}^{\rho}$, $F_{\rho, 1}$ is a completely symmetric vector because all of the $f_{y,\rho,1,2}$ are completely symmetric. Likewise, $F_{\rho, 1}$ is energy-preserving on $\mathcal T_{1}^{\rho}$, because of the vectors $ f_{y,u,j,j+1}$. By Observation \ref{obs}, $F_{\rho, 1}=0$ on $\mathcal T_{1}^{\rho}$. Since the $f_{y,\rho,1,2}$ are only supported on $\mathcal T_{1}^{\rho}\cup \mathcal T_{2}^{\rho}$ and $f_{y,\rho,1,2}\vert_{\mathcal T_{1}^{\rho}} = -f_{y,\rho,1,2}\vert_{\mathcal T_{2}^{\rho}} $, so is $F_{\rho, 1}$. Therefore, $F_{\rho, 1} = 0$ on $\mathcal T_{2}^{\rho}$ and thus on $\mathcal T_{h}$. So, 
 \begin{equation}\label{key}
\sum_{y\in \mathcal A_{S_{h-1}}}  c_{y,\rho,1} f_{y,\rho,1,2} = 0 \nonumber.
 \end{equation}
 By the independence of vectors in $ \mathcal A_{S_{h-1}}$, we conclude that $c_{y,\rho,1} = 0$ for all $y\in \mathcal A_{S_{h-1}}$, establishing the base case.
 
 For the induction step, assume that for some $(v, i)$, it is proven that $c_{y,w, k} = 0$ for all $(w, k)< (v, i)$ and  $y\in \mathcal A_{S_{h-\ell(w)-1}}$. We now show that $c_{y,v, i} = 0$ for all $y\in \mathcal A_{S_{h-\ell(v)-1}}$. By this assumption, the left-most side in \eqref{eq:indep:sum} reduces to
 \begin{equation} \label{eq:indep:induction}
 \sum c_{y,u, j} f_{y,u, j, j+1} = 0 
 \end{equation}
 where the sum runs over all $(u, j)\ge (v, i)$. Our argument now is similar to the base case. From \eqref{eq:indep:induction}, we have
 \begin{equation} 
 F_{v, i}:=\sum_{y\in \mathcal A_{S_{h-\ell(v)-1}}} c_{y,v,i} f_{y,v,i,i+1} = -\sum_{y, (v, i)<(u, j)} c_{y,u,j} f_{y,u,j,j+1}.\nonumber
 \end{equation}
 Similarly to the base case, when restricting on the subtree $\mathcal T_{i}^{v}$, $F_{v, i}$ is both completely symmetric and energy-preserving on $\mathcal T_{j}^{v}$. By Observation \ref{obs}, $F_{v, i}=0$ on $\mathcal T_{j}^{v}$. This leads to $F_{v, i} = 0$ on $\mathcal T_{i+1}^{v}$ and thus $F_{v, i}=0$ on $\mathcal T_{h}$. So, 
 \begin{equation}\label{key}
 \sum_{y\in \mathcal A_{S_{h-\ell(v)-1}}} c_{y,v,i} f_{y,v,i,i+1}  =0 \nonumber.
 \end{equation}
 By the independence of vectors in $ \mathcal A_{S_{h-\ell(v)-1}}$, we conclude that $c_{y,v, i} = 0$ for all $y\in \mathcal A_{S_{h-\ell(v)-1}}$, establishing the induction step and thus finishing the proof.
\end{proof}

\section{Proof of Theorem \ref{thm:lowerbound}}
\subsection{Proof of Theorem \ref{thm:lowerbound}(a)}
Consider the interchange process on $\mathcal T_h$. Let $Q_h'$ be the transition matrix of the ace of spades. In other words, $Q_h'$ is the transition matrix of any fixed card on the tree. 
By \cite[Theorem 1.1]{CLR}, the spectral gap of the interchange process on the complete $d$-ary tree of depth $h$ is the same as the spectral gap of $Q_h'$. We note that
\begin{equation}\label{key}
Q_h' = \frac{2n-d-3}{2(n-1)} I_n + \frac{d+1}{2(n-1)} Q_h.
\end{equation}
And therefore, the spectral gap of $Q_h'$ is $\frac{d+1}{2(n-1)}$ times the spectral gap of $Q_h$.

Thus \ref{thm:lowerbound} (a) is deduced from the following.
\begin{lemma}\label{lm:Q_h:gap} For sufficiently large $h$, the spectral gap of $Q_h$ is equal $1 - \lambda_2$ where $\lambda_2$ is the second largest eigenvalue of $Q_h$. Moreover,
\begin{equation}\label{eq:lm:gap}
\lambda_2 = 1- \frac{(d-1)^{2}}{(d+1)\cdot d^{h+1}} + O \left (\frac{\log_{d} n}{n^{2}}\right )
\end{equation}
\end{lemma}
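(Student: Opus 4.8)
The plan is to read the full eigenvalue list of $Q_h$ off of Theorem~\ref{thm:spectrum}, identify the second-largest eigenvalue $\lambda_2$, check that it is also the eigenvalue of largest modulus among those different from $1$ (so that $1-\lambda_2$ is indeed the spectral gap), and finally estimate it. Throughout write $P_k(x)$ for the left-hand side of \eqref{eq:x:antisym}. Since $R_h$ and each $S_k$ are similar to symmetric matrices (as in the proofs of Lemmas~\ref{lm:sym} and \ref{lm:antisym}), all their eigenvalues are real; and by Theorem~\ref{thm:spectrum} and Lemma~\ref{lm:spanning} the eigenvalues of $Q_h$ other than $1$ are exactly the eigenvalues of $R_h$ together with those of $S_0,\dots,S_{h-1}$.

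First I organize this spectrum. For an eigenvalue $\lambda$ and the corresponding $x$ in \eqref{eq:lambda:x:thm}, $x$ is a root of $dx^2-(d+1)\lambda x+1$, whose discriminant is $(d+1)^2\lambda^2-4d$; since $\lambda$ is real, either $x$ is real (exactly when $|\lambda|\ge\frac{2\sqrt d}{d+1}$, with equality only at $x=\pm\frac1{\sqrt d}$) or $x$ lies in a conjugate pair of modulus $\frac1{\sqrt d}$, in which case $\lambda=\frac{2d}{d+1}\mathrm{Re}(x)$ and so $|\lambda|<\frac{2\sqrt d}{d+1}$. In particular every eigenvalue of $R_h$ lies in $\bigl(-\frac{2\sqrt d}{d+1},\frac{2\sqrt d}{d+1}\bigr)$. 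Next, Descartes' rule of signs applied to $P_k$ shows it has exactly one negative real root, namely $-\frac1{\sqrt d}$, and one or three positive real roots; using the involution $x\mapsto\frac1{dx}$, if $P_k$ has a root in $(\frac1{\sqrt d},1)$ then its positive roots are exactly $\frac1{\sqrt d}$, $x^*_k$, $\frac1{dx^*_k}$, with $x^*_k$ the unique one in $(\frac1{\sqrt d},1)$. Combining this with the dichotomy above: letting $\mu_k$ denote the largest eigenvalue of $S_k$ (which is positive by Perron--Frobenius, and $<1$ since $1$ is a simple eigenvalue of $Q_h$ and is attained by a completely symmetric vector, not a pseudo anti-symmetric one), every eigenvalue of $S_k$ except $\mu_k$ lies in $\bigl[-\frac{2\sqrt d}{d+1},\frac{2\sqrt d}{d+1}\bigr]$, and $\mu_k>\frac{2\sqrt d}{d+1}$ precisely when $x^*_k$ exists, in which case $\mu_k=\frac d{d+1}\bigl(x^*_k+\frac1{dx^*_k}\bigr)$. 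Moreover, since $-\frac1{\sqrt d}$ is the only negative real root of $P_k$, no eigenvalue of $S_k$ lies below $-\frac{2\sqrt d}{d+1}$; hence every eigenvalue $\lambda\ne1$ of $Q_h$ satisfies $\lambda\ge-\frac{2\sqrt d}{d+1}$.

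Now I locate $\lambda_2$. A one-line estimate gives $P_k(1)=d-1>0$ while $P_k\!\bigl(1-d^{-(k+1)}\bigr)<0$ once $k\ge k_1(d)$, so $x^*_k$ exists and $\mu_k>\frac{2\sqrt d}{d+1}$ for all such $k$; in particular $\mu_{h-1}>\frac{2\sqrt d}{d+1}$ for $h$ large. Using $P_k(x^*_k)=0$ one computes $P_{k+1}(x^*_k)=d^{k+2}(x^*_k)^{2k+3}(1-x^*_k)\bigl(1-d(x^*_k)^2\bigr)<0$, because $x^*_k>\frac1{\sqrt d}$; since $P_{k+1}$ is negative just to the right of $\frac1{\sqrt d}$ and has its only zero in $(\frac1{\sqrt d},1)$ at $x^*_{k+1}$, this forces $x^*_k<x^*_{k+1}$, so $\mu_k$ is strictly increasing for $k\ge k_1$. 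As there are only finitely many $k<k_1$, each with $\mu_k<1$, for $h$ large we get $\mu_k<\mu_{h-1}$ for all $k\le h-2$, and hence, with the first two paragraphs, $\lambda_2=\mu_{h-1}$. Finally, since $\lambda_2=\mu_{h-1}>\frac{2\sqrt d}{d+1}$ we have $-\lambda_2<-\frac{2\sqrt d}{d+1}\le\lambda\le\lambda_2$ for every eigenvalue $\lambda\ne1$, so $\max_{\lambda\ne1}|\lambda|=\lambda_2$ and the spectral gap of $Q_h$ is $1-\lambda_2$.

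It remains to estimate $\lambda_2=\frac d{d+1}\bigl(x^*_{h-1}+\frac1{dx^*_{h-1}}\bigr)$. Put $x^*_{h-1}=1-\ep$; the bracket above gives $0<\ep<d^{-h}$, and then from $P_{h-1}(x^*_{h-1})=0$, i.e.\ $d^{h+1}(1-\ep)^{2h+1}\ep=(d-1)-d\ep$, one bootstraps to $\ep<\frac{2(d-1)}{d^{h+1}}$, whence $(1-\ep)^{2h+1}=1+O(hd^{-h-1})$, $(d-1)-d\ep=(d-1)(1+O(d^{-h}))$, and so $\ep=\frac{d-1}{d^{h+1}}+O(hd^{-2h-2})$. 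Substituting into $\lambda_2=1-\frac{(d-1)\ep}{d+1}+O(\ep^2)$ yields $\lambda_2=1-\frac{(d-1)^2}{(d+1)d^{h+1}}+O(hd^{-2h-2})$; since $d^{h+1}=(d-1)n+1$ and $h=\Theta(\log_d n)$, the error is $O(\log_d n/n^2)$, which is \eqref{eq:lm:gap}. The steps requiring the most care are the Descartes analysis of $P_k$ (used both for the lower bound $\lambda\ge-\frac{2\sqrt d}{d+1}$ on the spectrum and for isolating $x^*_k$) and the uniform-in-$k$ comparison $\mu_k<\mu_{h-1}$; I expect the latter to be the main obstacle, although the monotonicity identity for $P_{k+1}(x^*_k)$ above resolves it cleanly.
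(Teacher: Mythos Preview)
Your argument is correct and follows the same overall architecture as the paper's proof (stated there as Lemma~\ref{lm:bound:lambda}): classify the real eigenvalues via Theorem~\ref{thm:spectrum}, rule out eigenvalues below $-\frac{2\sqrt d}{d+1}$, show the second-largest eigenvalue comes from the polynomial with maximal index, and estimate the corresponding root near $1$. The final asymptotic computation is essentially identical in the two proofs.

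The one genuine difference is in how the monotonicity $x^*_k<x^*_{k+1}$ is established. The paper proves explicit two-sided bounds
\[
1-\frac{a_k}{d^{k+1}}<x^*_k<1-\frac{d-1}{d^{k+1}},\qquad a_k=d-1+\frac{2(d-1)^2(k+1)}{d^{k+1}},
\]
and then checks that the resulting intervals for consecutive $k$ are disjoint and ordered. You instead use the identity
\[
P_{k+1}(x^*_k)=d^{k+2}(x^*_k)^{2k+3}(1-x^*_k)\bigl(1-d(x^*_k)^2\bigr)<0,
\]
together with Descartes' rule to pin down the sign pattern of $P_{k+1}$ on $\bigl(\tfrac{1}{\sqrt d},1\bigr)$, which directly forces $x^*_k<x^*_{k+1}$ and simultaneously shows that the existence of $x^*_k$ implies that of $x^*_{k+1}$. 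This is a cleaner route: it avoids the somewhat delicate interval arithmetic and gives monotonicity for \emph{all} $k$ at which $x^*_k$ exists, not just $k\ge h_0$; the finitely many small $k$ with no real root in $\bigl(\tfrac{1}{\sqrt d},1\bigr)$ are then handled trivially since their $\mu_k\le\frac{2\sqrt d}{d+1}$. Your use of Descartes' rule to show $-\tfrac{1}{\sqrt d}$ is the unique negative real root of $P_k$ is likewise a tidy replacement for the paper's direct inequality $d^{k+1}x^{2k+2}>1$, $-d^{k+1}x^{2k+1}>-dx$ on $(-\infty,-\tfrac{1}{\sqrt d})$. One cosmetic point: when you assert $\mu_k<1$, the simplicity of the eigenvalue $1$ in $Q_h$ gives $\mu_k\neq 1$; the inequality $\mu_k<1$ then follows since $Q_h$ is stochastic (so its spectral radius is $1$), which you are implicitly using.
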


To prove Lemma \ref{lm:Q_h:gap}, we shall use Theorem \ref{thm:spectrum}. Let $\lambda$ be an eigenvalue of $Q_h$ and $x$ be a solution of
\begin{equation} \label{eq:x:lambda:1:1}
d x^{2} - (d+1)\lambda x+1 = 0
\end{equation}
which we have encountered in \eqref{eq:x:lambda:1}.

Note that if $\lambda^{2}\ge \frac{4d}{(d+1)^{2}}$ then this equation has two real solutions both of which have the same sign as $\lambda$.
 
Since the equation \eqref{eq:x:sym:thm} only has nonreal solutions except $x = \pm \frac{1}{\sqrt d}$, combining this observation with Theorem \ref{thm:spectrum}, each eigenvalue $\lambda^{2}\ge \frac{4d}{(d+1)^{2}}$ is given by Equation \eqref{eq:lambda:x:thm} for some $x \neq \pm \frac{1}{\sqrt d}$ satisfying
 \begin{equation} \label{eq:x:anti:k}
 d^{k+1} x ^{2k+2}- d^{k+1} x ^{2k+1}+ dx  -1 = 0,
 \end{equation}
for some $k\in [1, h]$ which is simply Equation \eqref{eq:x:antisym} (with $k$ being shifted for notational convenience).

 We shall show the following
 \begin{lemma}\label{lm:bound:lambda}
 	\begin{enumerate}  [label = (\alph*)]
 		\item For all $k\in [1, h]$, Equation \eqref{eq:x:anti:k} has no solutions in $\left (-\infty, -\frac{1}{\sqrt d}\right )$. There are no eigenvalues of $Q_h$ less than $-\sqrt  \frac{4d}{(d+1)^{2}}$.
\item  	 There exists a constant $h_0>0$ such that for all $k\ge h_0$, the largest solution $x$ of \eqref{eq:x:anti:k} satisfies
 	\begin{equation}\label{eq:bound:x}
 	1 - \frac{a}{d^{k+1}} < x<1 - \frac{d-1}{d^{k+1}} \quad\text{where}\quad a=d-1 + \frac{2(d-1)^{2}(k+1)}{d^{k+1}}.
 	\end{equation}
 	Furthermore, for $k=h$, the eigenvalue that corresponds to this $x$ satisfies
 	\begin{equation}\label{eq:bound:lambda}
 	\left |\lambda - \left (1-\frac{(d-1)^{2}}{(d+1)\cdot d^{h+1}}\right )\right | = O\left (\frac{\log_{d} n}{n^{2}}\right ).
 	\end{equation}
 \end{enumerate}
 \end{lemma}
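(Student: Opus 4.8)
The plan is to reduce everything to the single polynomial
\[
P_k(x):=d^{k+1}x^{2k+1}(x-1)+(dx-1)=d^{k+1}x^{2k+2}-d^{k+1}x^{2k+1}+dx-1,
\]
whose roots other than $\pm\frac1{\sqrt d}$ are exactly the $x$'s appearing in \eqref{eq:x:anti:k}, and then to transfer sign information about $P_k$ through the relation $\lambda=\frac d{d+1}\bigl(x+\frac1{dx}\bigr)$. I would first record the elementary facts that $P_k(\pm\frac1{\sqrt d})=0$, that $P_k(x)>0$ for every $x\ge1$ (both summands are $\ge0$ and $dx-1\ge d-1>0$), and that $P_k(1)=d-1>0$; the last two already tell us the largest real root of $P_k$ is strictly less than $1$ and where to look for it.

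\textbf{Part (a).} For $x<0$ I substitute $t:=dx^2$, i.e.\ $x=-\sqrt{t/d}$; a direct computation gives $P_k(x)=(t^{k+1}-1)+\sqrt d\,(t^{k+1/2}-t^{1/2})$. If $x<-\frac1{\sqrt d}$ then $t>1$, both bracketed terms are $\ge0$ with the first strictly positive, so $P_k(x)>0$: Equation \eqref{eq:x:anti:k} has no root in $(-\infty,-\frac1{\sqrt d})$. If $-\frac1{\sqrt d}<x<0$ then $0<t<1$, both terms are negative, so $P_k(x)<0$; since also $P_k(0)=-1$, the only root of $P_k$ in $(-\infty,0]$ is $-\frac1{\sqrt d}$. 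Now suppose towards a contradiction that $Q_h$ has an eigenvalue $\lambda<-\frac{2\sqrt d}{d+1}=-\sqrt{4d/(d+1)^2}$ (so $\lambda$ is real and negative), and let $x$ be a root of \eqref{eq:x:lambda:1:1}. If $x$ is non-real, the discriminant $(d+1)^2\lambda^2-4d$ of this real-coefficient quadratic is negative, forcing $|\lambda|<\frac{2\sqrt d}{d+1}$, a contradiction; so $x$ is real, and then both roots of \eqref{eq:x:lambda:1:1} are real and negative (their sum is $\frac{(d+1)\lambda}{d}<0$, their product $\frac1d>0$) and $\ne\pm\frac1{\sqrt d}$. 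By Theorem \ref{thm:spectrum} such an $x$ solves \eqref{eq:x:sym:thm} or \eqref{eq:x:antisym}; the real roots of \eqref{eq:x:sym:thm} are only $\pm\frac1{\sqrt d}$, so $x$ would be a negative real root of \eqref{eq:x:anti:k} other than $-\frac1{\sqrt d}$, impossible by the sign analysis above. This proves (a).

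\textbf{Part (b), the root bound.} Put $\epsilon_1:=\frac{d-1}{d^{k+1}}$ and $\epsilon_2:=\frac a{d^{k+1}}$, so $\epsilon_1<\epsilon_2$; the goal is to show $P_k(1-\epsilon_1)>0>P_k(1-\epsilon_2)$ and that $P_k$ is strictly increasing on $[1-\epsilon_2,\infty)$, which forces the largest real root of $P_k$ into $(1-\epsilon_2,1-\epsilon_1)$ and gives \eqref{eq:bound:x}. Using $d^{k+1}\epsilon_1=d-1$ one gets $P_k(1-\epsilon_1)=(d-1)\bigl(1-(1-\epsilon_1)^{2k+1}\bigr)-d\epsilon_1$, and the binomial bound $(1-\epsilon_1)^{2k+1}\le1-(2k+1)\epsilon_1+k(2k+1)\epsilon_1^2$ makes this at least $\epsilon_1\bigl[(2k+1)(d-1)-d\bigr]-(d-1)k(2k+1)\epsilon_1^2$, which is positive for $k$ large since $(2k+1)(d-1)-d\ge2k(d-1)-1>0$ while $k^2\epsilon_1\to0$. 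Likewise $P_k(1-\epsilon_2)=-a(1-\epsilon_2)^{2k+1}+(d-1)-d\epsilon_2$, and Bernoulli's inequality $(1-\epsilon_2)^{2k+1}\ge1-(2k+1)\epsilon_2$ reduces $P_k(1-\epsilon_2)<0$ to $2(d-1)^2(k+1)>a^2(2k+1)-ad$; since $a=(d-1)+\frac{2(d-1)^2(k+1)}{d^{k+1}}\to d-1$, the left side minus the right side tends to $(d-1)(2d-1)>0$, so this holds for $k$ large. Finally $P_k'(x)=d^{k+1}x^{2k}\bigl[(2k+2)x-(2k+1)\bigr]+d\ge d>0$ whenever $x\ge1-\frac1{2k+2}$, and both $1-\epsilon_1$ and $1-\epsilon_2$ exceed this threshold for $k$ large (the offsets are exponentially small, $\frac1{2k+2}$ only polynomially small); hence $P_k>0$ on $[1-\epsilon_1,\infty)$ while the intermediate value theorem produces a root in $(1-\epsilon_2,1-\epsilon_1)$. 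Choosing $h_0$ large enough to absorb all these conditions yields \eqref{eq:bound:x}.

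\textbf{Part (b), the eigenvalue, and the main obstacle.} For $k=h$ write $x=1-\delta$ with $\delta\in(\epsilon_1,\epsilon_2)$. Since $d^{h+1}=(d-1)n+1$ and $h=\log_d n+O(1)$, we get $\delta=\frac{d-1}{d^{h+1}}+O(\log_d n/n^2)$ and $\delta=\Theta(1/n)$; expanding $\lambda=\frac d{d+1}\bigl(x+\frac1{dx}\bigr)$ about $x=1$ gives $x+\frac1{dx}=\frac{d+1}{d}-\frac{d-1}{d}\delta+O(\delta^2)$, so $\lambda=1-\frac{d-1}{d+1}\delta+O(\delta^2)=1-\frac{(d-1)^2}{(d+1)d^{h+1}}+O(\log_d n/n^2)$, which is \eqref{eq:bound:lambda}. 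The delicate point throughout is the bookkeeping in part (b): one must check that the correction $\frac{2(d-1)^2(k+1)}{d^{k+1}}$ built into $a$ is simultaneously \emph{large} enough to flip the sign of $P_k(1-\epsilon_2)$ through the (lossy) Bernoulli step and \emph{small} enough that $\delta=1-x$ stays within $O(\log_d n/n^2)$ of $\frac{d-1}{d^{h+1}}$, and that the error terms in the binomial/Bernoulli estimates and the final $d^{h+1}$-versus-$n$ conversion are controlled uniformly in $d$. Part (a), by contrast, is routine once the substitution $t=dx^2$ is found.
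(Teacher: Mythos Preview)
Your proof is correct and follows essentially the same route as the paper's: define the polynomial $P_k$, rule out roots in $(-\infty,-1/\sqrt d)$ by a sign argument and conclude part (a) by contradiction, then in part (b) show $P_k'>0$ past $1-\frac1{2k+2}$, verify the sign change $P_k(1-\epsilon_2)<0<P_k(1-\epsilon_1)$ via Bernoulli-type bounds, and finally Taylor-expand $\lambda$ about $x=1$. The only cosmetic differences are your substitution $t=dx^2$ in part (a) (the paper compares the four monomials directly) and your second-order binomial bound for $P_k(1-\epsilon_1)>0$ (the paper uses $(1-\epsilon_1)^{2k+1}\le 1-3\epsilon_1$ for large $k$); neither changes the structure of the argument.
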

 
 Assuming Lemma \ref{lm:bound:lambda}, we conclude that for sufficiently large $h$, the largest $x$ that satisfies one of the equations \eqref{eq:x:anti:k} for some $k$ in $[1, h]$ satisfies
 \begin{equation} 
 1 - \frac{a}{d^{h+1}} < x<1 - \frac{d-1}{d^{h+1}} \quad\text{where}\quad a=d-1 + \frac{2(d-1)^{2}(h+1)}{d^{h+1}}.\nonumber
 \end{equation}
Since the right-hand side of \eqref{eq:lambda:x:thm} is increasing in $x$ for $x\ge \frac{1}{\sqrt d}$, the second largest eigenvalue $\lambda_2$ of $Q_h$ corresponds to such $x$ and so it satisfies \eqref{eq:bound:lambda}, proving \eqref{eq:lm:gap}. By the first part of Lemma \ref{lm:bound:lambda}, there are no eigenvalues of $Q_h$ whose absolute value is larger than $\lambda_2$. This proves Lemma \ref{lm:Q_h:gap}.
 
 \begin{proof}[Proof of Lemma \ref{lm:bound:lambda}]
 	Let $f(x) = d^{k+1} x^{2k+2} - d^{k+1} x^{2k+1} + dx-1$. 
 	
 	To prove part (a), for all $x< -\frac{1}{\sqrt d}$, we have
 	$$d^{k+1}x^{2k+2}> 1\quad\text{and}\quad - d^{k+1} x^{2k+1} > - dx$$
 	and so $f$ has no roots in $\left (-\infty, -\frac{1}{\sqrt d}\right )$. Assume that there were an eigenvalue $\lambda<-\sqrt  \frac{4d}{(d+1)^{2}}$. By the argument right before \eqref{eq:x:anti:k}, Equation \eqref{eq:x:lambda:1:1} has two negative solutions $x_1<x_2$ with $x_1 x_2=\frac{1}{d}$. We conclude that $x_1<-\frac{1}{\sqrt d}$. This is a contradiction because $x_1$ satisfies \eqref{eq:x:anti:k} for some $k$ while for all $k$, the function $f$ has no roots less than $-\frac{1}{\sqrt d}$.

 	To prove part (b), for all $x\ge \frac{2k+1}{2k+2}$, we have
 	\begin{equation}\label{key}
 	f'(x) = d^{k+1} x^{2k}\left ((2k+2)x-(2k+1)\right ) +d > 0.\nonumber
 	\end{equation}
 	Thus, $f$ is increasing on the interval $[1 - \frac{1}{2k+2} , \infty)$ which contains $[1 - \frac{a}{d^{k+1}}, 1 - \frac{d-1}{d^{k+1}}]$ for sufficiently large $k$. Thus, to prove \eqref{eq:bound:x}, it suffices to show that 
 	\begin{equation}\label{eq:derivative:test}
 	f\left (1 - \frac{a}{d^{k+1}}\right )<0<f\left (1 - \frac{d-1}{d^{k+1}}\right )
 	\end{equation}
 	for sufficiently large $k$. 
 	Indeed, 
 	\begin{equation}\label{key}
 	f\left (1 - \frac{a}{d^{k+1}}\right ) = d-1 - a\left (1 - \frac{a}{d^{k+1}}\right )^{2k+1} -\frac{a}{d^{k}} < d-1- a \left (1 - \frac{a}{d^{k+1}}\right )^{2k+1}<d-1- a \left (1 - \frac{a(2k+1)}{d^{k+1}}\right ) \nonumber
 	\end{equation}
 	which is, by plugging in $a=d-1 +\frac{2(d-1)^{2}(k+1)}{d^{k+1}}$, at most
 	\begin{equation}\label{key}
 	-\frac{2(d-1)^{2}(k+1)}{d^{k+1}} + \frac{a^{2}(2k+1)}{d^{k+1}}\le 0.\nonumber
 	\end{equation}
 	Thus the first inequality of \eqref{eq:derivative:test} holds. 
 	For the second inequality, we have
 	\begin{eqnarray} 
 	f\left (1 - \frac{d-1}{d^{k+1}}\right ) &=& d-1 - (d-1)\left (1 - \frac{d-1}{d^{k+1}}\right )^{2k+1} -\frac{d-1}{d^{k}}\nonumber\\
 	& \ge& d-1 - (d-1)\left (1 - \frac{3(d-1)}{d^{k+1}}\right )  -\frac{d-1}{d^{k}} >0,\nonumber
 	\end{eqnarray}
 	proving the \eqref{eq:derivative:test}. 
 	
 	We have shown that there exists a solution $x = 1-\alpha$ where $\frac{d-1}{d^{k+1}}\le \alpha \le \frac{a}{d^{k+1}}$. Let $\lambda$ be the eigenvalue corresponding to $x$ as in \eqref{eq:lambda:x}. We have
 	\begin{equation}\label{key}
 	\frac{d+1}{d}\lambda = 1 - \alpha+\frac{1}{d(1-\alpha)} \in  \left (1 - \alpha+\frac{1}{d} (1 +\alpha), 1 - \alpha+\frac{1}{d} (1 +\alpha+2\alpha^{2})\right ). \nonumber 
 	\end{equation}
 	In other words, 
 	\begin{equation}\label{key}
 	\frac{d+1}{d}\lambda \in\left (\frac{d+1}{d} -\frac{d-1}{d} \alpha , \frac{d+1}{d} -\frac{d-1}{d} \alpha +\frac{2}{d}\alpha^{2}\right ). \nonumber
 	\end{equation}
 	Using the bounds $\frac{d-1}{d^{k+1}}\le \alpha \le \frac{a}{d^{k+1}}$, we obtain
 	\begin{equation}\label{key}
 	\lambda - \left (1-\frac{(d-1)^{2}}{(d+1)\cdot d^{k+1}}\right )  \le  \frac{2}{d+1}\alpha^{2}\le \frac{2a^{2}}{(d+1)\cdot d^{2k+2}}\le \frac{2}{(d+1)\cdot d^{2k+1}}\nonumber 
 	\end{equation}
 	and
 	\begin{equation}\label{key}
 	\lambda - \left (1-\frac{(d-1)^{2}}{(d+1)\cdot d^{k+1}}\right )  \ge  -\frac{d-1}{d+1}\alpha+\frac{(d-1)^{2}}{(d+1)\cdot d^{k+1}} \ge -\frac{2(d-1)^{3}(k+1)}{(d+1)\cdot d^{2k+2}}\ge - \frac{2(k+1)}{d^{2k}}.\nonumber 
 	\end{equation}
 	Thus, for $k=h$,
 	\begin{equation} 
 	\left |\lambda - \left (1-\frac{(d-1)^{2}}{(d+1)\cdot d^{h+1}}\right )\right | \le \frac{2(h+1)}{d^{2h}} .\nonumber
 	\end{equation}
 	These bounds together with the equation $n = \frac{d^{h+1}-1}{d-1} \in (d^{h}, 2d^{h})$ give \eqref{eq:bound:lambda}.
 \end{proof}

 \subsection{Proof of Theorem \ref{thm:lowerbound} (b)}
 For the proof of the lower bound, we will use Wilson's lemma.
 \begin{lemma}[Lemma 5, \cite{Wilson}]\label{W}
 	Let $\ep, R$ be positive numbers and $0<\gamma< 2-\sqrt{2} $. Let $F: X\to \R$ be a function on the state space $X$ of a Markov chain $(C_t)$ such that 
 	$$\expect{F(C_{t+1})\vert C_t) }= (1 - \gamma )F(C_t), \quad \expect{\left [F(C_{t+1})- F(C_{t})\right ]^2 \vert C_t} \leq  R,$$ and 
 	$$t \leq \frac{ \log \max_{x\in X}F(x) + \frac{1}{2} \log( \gamma  \varepsilon/(4R))}{-\log (1 - \gamma )}.
 	$$ Then the total variation distance from stationarity at time $t$ is at least $1-\ep$.
 \end{lemma}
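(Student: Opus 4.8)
The plan is to establish Wilson's lemma by the classical distinguishing‑statistic argument, treating $F$ as an approximate eigenfunction of the chain with eigenvalue $1-\gamma$. Applying the first hypothesis under $\pi$ and using stationarity gives $\E_\pi[F]=(1-\gamma)\,\E_\pi[F]$, hence $\E_\pi[F]=0$; writing $M:=\max_{x\in X}F(x)$, the case $M\le 0$ forces $F\equiv 0$ and the statement is vacuous, so assume $M>0$ and fix $x_0$ with $F(x_0)=M$. Start the chain from $C_0=x_0$; since the total variation distance from stationarity at time $t$ is at least $\norm{P^t(x_0,\cdot)-\pi}_{TV}$, it suffices to lower‑bound the latter. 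Iterating $\expect{F(C_{t+1})\mid C_t}=(1-\gamma)F(C_t)$ with the tower property yields the first‑moment identity $\mu_t:=\expect{F(C_t)}=(1-\gamma)^tM$, which for $t$ in the stated range stays macroscopically far from $0$.

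The next step is a second‑moment estimate. With $\Delta_t:=F(C_{t+1})-F(C_t)$ the first hypothesis gives $\expect{\Delta_t\mid C_t}=-\gamma F(C_t)$, so from $F(C_{t+1})-(1-\gamma)F(C_t)=\Delta_t+\gamma F(C_t)$ one gets
$$\operatorname{Var}\!\bigl(F(C_{t+1})\mid C_t\bigr)=\expect{\Delta_t^{2}\mid C_t}-\gamma^{2}F(C_t)^{2}\ \le\ R-\gamma^{2}F(C_t)^{2},$$
and hence $\expect{F(C_{t+1})^{2}\mid C_t}\le R+(1-2\gamma)F(C_t)^{2}$. Taking $\E_\pi$ of this pointwise inequality and using stationarity gives $\E_\pi[F^{2}]\le R+(1-2\gamma)\E_\pi[F^{2}]$, i.e.\ $\operatorname{Var}_\pi(F)=\E_\pi[F^{2}]\le R/(2\gamma)$. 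For the chain started at $x_0$, the law of total variance together with the conditional bound yields $\operatorname{Var}(F(C_{t+1}))\le R-\gamma^{2}\mu_t^{2}+(1-2\gamma)\operatorname{Var}(F(C_t))$, and iterating from $\operatorname{Var}(F(C_0))=0$ gives $\operatorname{Var}(F(C_t))\le R\sum_{j=0}^{t-1}(1-2\gamma)^{j}\le R/(2\gamma)$; the hypothesis $\gamma<2-\sqrt 2$, which is exactly the inequality $|1-2\gamma|<(1-\gamma)^{2}$, is what guarantees that the $-\gamma^2\mu_t^2$ correction keeps this sum under control and that the constant stays $R/(2\gamma)$.

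Finally I would separate the two laws. Put $a:=\tfrac12\mu_t=\tfrac12(1-\gamma)^{t}M>0$ and $A:=\{y\in X:F(y)\ge a\}$. Chebyshev applied to $F(C_t)$ (mean $\mu_t$, variance $\le R/(2\gamma)$) gives $P^t(x_0,A)\ge 1-4\operatorname{Var}(F(C_t))/\mu_t^{2}$, while Chebyshev applied to $F$ under $\pi$ (mean $0$, variance $\le R/(2\gamma)$) gives $\pi(A)\le 4\operatorname{Var}_\pi(F)/\mu_t^{2}$, so
$$\norm{P^t(x_0,\cdot)-\pi}_{TV}\ \ge\ P^t(x_0,A)-\pi(A)\ \ge\ 1-\frac{4\bigl(\operatorname{Var}(F(C_t))+\operatorname{Var}_\pi(F)\bigr)}{\mu_t^{2}}\ \ge\ 1-\frac{4R}{\gamma\,(1-\gamma)^{2t}M^{2}}.$$
Since $-\log(1-\gamma)>0$, the hypothesis $t\le\bigl(\log M+\tfrac12\log(\gamma\ep/(4R))\bigr)/(-\log(1-\gamma))$ rearranges to $(1-\gamma)^{2t}\ge 4R/(\gamma\ep M^{2})$, which makes the right‑hand side at least $1-\ep$, as required. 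The step I expect to be the main obstacle is the second‑moment bound: bringing the variance of $F(C_t)$ for the chain started at $x_0$ down to $R/(2\gamma)$ (uniformly in $t$) requires absorbing the drifting mean $\mu_t$ into the recursion $\operatorname{Var}(F(C_{t+1}))\le R+(1-2\gamma)\operatorname{Var}(F(C_t))$, and this is exactly the place where $\gamma<2-\sqrt 2$ is used and where the constant $4R$ in the conclusion originates; everything else (the tower property, the conditional‑variance expansion, the law of total variance, Chebyshev's inequality, and the logarithmic rearrangement) is routine.
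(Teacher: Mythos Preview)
The paper does not prove this statement at all: Lemma~\ref{W} is simply quoted from Wilson's paper and used as a black box in the proof of Theorem~\ref{thm:lowerbound}(b), so there is no argument in the paper to compare yours against.

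That said, your sketch is exactly the standard second--moment argument behind Wilson's lemma, and it is correct. The only point worth tightening is the line
\[
\operatorname{Var}(F(C_t))\ \le\ R\sum_{j=0}^{t-1}(1-2\gamma)^{j}\ \le\ \frac{R}{2\gamma},
\]
whose last inequality is literally false when $\gamma>\tfrac12$ and $t$ is odd (e.g.\ $t=1$ gives $R$, not $R/(2\gamma)$). You flag this yourself and point to the $-\gamma^{2}\mu_t^{2}$ correction and the condition $|1-2\gamma|<(1-\gamma)^{2}$, but in fact the issue is moot: since $\E[\Delta_t\mid C_t]=-\gamma F(C_t)$ and $\E[\Delta_t^{2}\mid C_t]\le R$, Jensen at $C_t=x_0$ forces $R\ge\gamma^{2}M^{2}$, so the lemma is vacuous (the upper bound on $t$ is negative) unless $4R/(\gamma\varepsilon)\le M^{2}\le R/\gamma^{2}$, i.e.\ unless $\gamma\le\varepsilon/4$. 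For $\varepsilon\le 1$ this already gives $\gamma<\tfrac12$, in which range $0<1-2\gamma<1$ and your geometric sum bound $\sum_{j<t}(1-2\gamma)^{j}\le 1/(2\gamma)$ is valid as written. The remaining steps --- the tower property for $\mu_t$, the law of total variance, the stationary bound $\operatorname{Var}_\pi(F)\le R/(2\gamma)$, Chebyshev on $\{F\ge\mu_t/2\}$, and the logarithmic rearrangement --- are all correct and yield the stated constant $4R$.
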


 \begin{proof}[Proof of Theorem \ref{thm:lowerbound} (b)]
 Let $0<x<1$ be a solution of \eqref{eq:x:antisym} (for $k=h$) satisfying \eqref{eq:bound:x} and $\lambda$ be the eigenvalue of $Q_h$ corresponding to $x$. In particular,
\begin{equation}
\lambda = \frac{d}{d+1}\left (x+ \frac{1}{dx}\right ).\nonumber
\end{equation}
Let $f:\mathcal T_{h}\to \R$ be an eigenvector of $Q_h$ corresponding to $\lambda$. As in the proof of Lemma \ref{lm:anti:detail}, we can choose $f$ as follows.
\begin{equation}\label{eq:def:f}
f(v) = \begin{cases}
0\quad \text{if } v\notin T_{1}^{\rho}\cup T_{2}^{\rho},  \\
dx^{l(v)+2} - \frac{1}{d^{l(v)-1}x^{l(v)-2}}, \quad \text{if } v\in T_{1}^{\rho},\\
-dx^{l(v)+2} + \frac{1}{d^{l(v)-1}x^{l(v)-2}}, \quad \text{if } v\in T_{2}^{\rho}.
\end{cases}
\end{equation}

We now consider the interchange process on the $d$-ary tree $\mathcal T_h$. Fix an arbitrary enumeration of the vertices of $\mathcal T_{h}$ by $1, 2, \dots, n$. Let $\sigma \in S_n$. We define $F (\sigma)= \sum_{v=1}^n f(v) f(\sigma(v))$.
Then, we have that 
\begin{eqnarray}
\expect{F(\sigma_{t+1})\vert \sigma_t) }= \frac{1}{n-1}\sum_{e} \E\left (F(\sigma_{t+1})\vert \sigma_t, e\right ), \nonumber
\end{eqnarray}
where the sum runs over all $n-1$ edges $e$ of the tree and the conditioning on the right is conditioning on the edge $e$ being chosen. 
So, 
\begin{eqnarray}
\expect{F(\sigma_{t+1})\vert \sigma_t) }= \frac{1}{n-1}\sum_{e} \sum_{v=1}^{n} f(v)  \E\left [f({\sigma}_{t+1}(v))\vert \sigma_t, e\right ]=\frac{1}{n-1}  \sum_{v=1}^{n} f(v) \sum_{e} \E\left [f({\sigma}_{t+1}(v))\vert \sigma_t, e\right ]\nonumber.
\end{eqnarray}

By direct computation, we obtain
\begin{equation}\label{key}
\sum_{e} \E\left (f(\sigma_{t+1}(v)\vert \sigma_t, e\right )= \left (n-\frac{d}{2}-\frac{3}{2} + \frac{\lambda(d+1)}{2}\right ) f(\sigma_t(v)).\nonumber
\end{equation}

So, we have
\begin{eqnarray}
\expect{F(\sigma_{t+1})\vert \sigma_t) }= \frac{n-\frac{d}{2}-\frac{3}{2} + \frac{\lambda(d+1)}{2}}{n-1} F(\sigma_t)\nonumber.
\end{eqnarray}

For $n $ sufficiently big, we  have that 
$$F(id)= \sum_{v=1}^n f^{2}(v)  \geq 2 d^{h-1} \left (dx^{h+2} - \frac{1}{d^{h-1}x^{h-2}}\right )^2 \geq \frac{dn}{2},$$
where the first inequality occurs from keeping only the leaves of $T_{1}^{\rho}\cup T_{2}^{\rho}$ and the last inequality follows from \eqref{eq:bound:x}.

Finally, we consider what happens if we change a configuration $\sigma_t$ by transposing an edge $e$, which connects two vertices $u$ and $v$. We have that 
$$| F(\sigma_{t+1})- F(\sigma_t)|= |(f(u)-f(v)) (f(\sigma(u)) - f(\sigma(v)))|.$$
Note that for all vertices $w$, we have by definition of $f$,
$$|f(w)| \le d$$
and by \eqref{eq:bound:x}, assuming wlog that $l(u) = l(v)+1 =: l+1$,
$$|f(v) - f(u)|\le d x^{l+2} (1-x) + \frac{1}{d^{l-1}x^{l-2}}\left (1 - \frac{1}{dx}\right )\le \frac{1}{d^{h-1}} + \frac{1}{d^{l-1}}\le \frac{2}{d^{l-1}}.$$
Thus, $|(f(u)-f(v)) (f(\sigma(u)) - f(\sigma(v)))|^{2}\le \frac{16}{d^{2l-4}}$. Note that by \eqref{eq:def:f}, the left-hand side is 0 if neither $u$ nor $v$ belongs to $T_{1}^{\rho}\cup T_{2}^{\rho}$. And so,
$$\E_{e} \left (\left (F(\sigma_{t+1})- F(\sigma_t)\right )^{2}\vert \sigma_t\right ) \le \frac{1}{n-1} \sum_{l=0}^{h} 2d^{l} \frac{16}{d^{2l-4}} \le \frac{64 d^{4}}{n-1},$$
where $2d^{l}$ is the number of edges $e$ that connect levels $l$ and $l+1$ of $T^{1}_{\rho}\cup T^{2}_{\rho}$.

So we can take $R = \frac{64d^{4}}{n-1}$.

Set $\lambda'= \frac{n-\frac{d}{2}-\frac{3}{2} + \frac{\lambda(d+1)}{2}}{n-1}$. Using Wilson's lemma, we have that if $t \le t_0:=\frac{ \log (F(id)) + \frac{1}{2} \log( (1-\lambda') \varepsilon/(4R))}{-\log (\lambda')}$ then the total variation distance is at least $1- \ep$. By Lemma \ref{lm:Q_h:gap}, 
$$\lambda=  1-\frac{(d-1)^{2}}{(d+1)\cdot d^{h+1}}  + O\left (\frac{\log_{d} n}{n^{2}}\right ),$$ 
which gives
$$1 - \lambda' = \frac{(d+1)(1-\lambda)}{2(n-1)} =\frac{(d-1)^{2}}{2(n-1)d^{h+1}} + O\left (\frac{d\log_{d} n}{n^{3}}\right ).$$
we get
\begin{eqnarray}
t_0 &=& \frac{\frac{\log n}{2}+\frac{\log \ep}{2}+O(\log d)}{\frac{(d-1)^{2}}{2(n-1)d^{h+1}} + O\left (\frac{d\log_{d} n}{n^{3}}\right )} =\left (\frac{\log n}{2}+\frac{\log \ep}{2}+O(\log d)\right )\left (\frac{2n^{2}}{d-1} + O\left (n\log_{d}n\right )\right )  \nonumber\\
&=& \frac{n^{2}\left (\log n+\log \ep\right ) }{d-1}+ O\left (n^{2}\right )\nonumber.
\end{eqnarray}
This completes the proof.
\end{proof}

\bibliographystyle{plain}
\bibliography{tree2}
\end{document}